\documentclass[a4paper,reqno]{amsart}%
\usepackage{pgf,tikz}
\usepackage{amssymb}
\usepackage{amsmath}
\usepackage{amsfonts}
\usepackage{graphicx}
\usepackage{bm}%
\setcounter{MaxMatrixCols}{30}
\providecommand{\U}[1]{\protect\rule{.1in}{.1in}}
\usetikzlibrary{backgrounds}
\usetikzlibrary{arrows}
\let\oldmathbf\mathbf
\renewcommand{\mathbf}[1]{\boldsymbol{\oldmathbf{#1}}}
\newtheorem{theorem}{Theorem}

\newtheorem{corollary}[theorem]{Corollary}

\newtheorem{lemma}[theorem]{Lemma}

\newtheorem{proposition}[theorem]{Proposition}

\allowdisplaybreaks
\begin{document}
\title{On a sharp lemma of Cassels and Montgomery on manifolds}
\author[L. Brandolini]{Luca Brandolini}
\address{Dipartimento di Ingegneria Gestionale, dell'Informazione e della Produzione,
Universit\`a degli Studi di Bergamo, Viale Marconi 5, Dalmine BG, Italy}
\email{luca.brandolini@unibg.it}
\author[B. Gariboldi]{Bianca Gariboldi}
\address{Dipartimento di Ingegneria Gestionale, dell'Informazione e della Produzione,
Universit\`a degli Studi di Bergamo, Viale Marconi 5, Dalmine BG, Italy}
\email{biancamaria.gariboldi@unibg.it}
\author[G. Gigante]{Giacomo Gigante}
\address{Dipartimento di Ingegneria Gestionale, dell'Informazione e della Produzione,
Universit\`a degli Studi di Bergamo, Viale Marconi 5, Dalmine BG, Italy}
\email{giacomo.gigante@unibg.it}
\subjclass[2010]{ 41A55 11K38}

\begin{abstract}
Let $\left(  \mathcal{M},g\right)  $ be a $d$-dimensional compact connected
Riemannian manifold and let $\left\{  \varphi_{m}\right\}  _{m=0}^{+\infty}$
be a complete sequence of orthonormal eigenfunctions of the Laplace-Beltrami
operator on $\mathcal{M}$. We show that there exists a positive constant $C$
such that for all integers $N$ and $X$ and for all finite sequences of $N$
points in $\mathcal{M}$, $\left\{  x\left(  j\right)  \right\}  _{j=1}^{N}$,
and positive weights $\left\{  a_{j}\right\}  _{j=1}^{N}$ we have%
\[
\sum_{m=0}^{X}\left\vert \sum_{j=1}^{N}a_{j}\varphi_{m}\left(  x\left(
j\right)  \right)  \right\vert ^{2}\geq\max\left\{  CX\sum_{j=1}^{N}a_{j}%
^{2},\left(  \sum_{j=1}^{N}a_{j}\right)  ^{2}\right\}  .
\]

\thanks{The authors have been supported by a GNAMPA 2019 project.}

\end{abstract}
\maketitle

\bigskip


Let $\left(  \mathcal{M},g\right)  $ be a $d$-dimensional compact connected
Riemannian manifold, with normalized Riemannian measure $\mu$ such that
$\mu(\mathcal{M})=1$, and Riemannian distance $d\left(  x,y\right)  $. Let
$\left\{  \lambda_{m}^{2}\right\}  _{m=0}^{+\infty}$ be the sequence of
eigenvalues of the (positive) Laplace-Beltrami operator $\Delta$, listed in
increasing order with repetitions, and let $\left\{  \varphi_{m}\right\}
_{m=0}^{+\infty}$ be an associated sequence of orthonormal eigenfunctions. In
particular $\varphi_{0}\equiv1$ and $\lambda_{0}=0$. This allows to define the
Fourier coefficients of $L^{1}(\mathcal{M})$ functions as
\[
\widehat{f}(\lambda_{m})=\int_{\mathcal{M}}f(x)\overline{\varphi_{m}(x)}%
d\mu(x)
\]
and the associated Fourier series
\[
\sum_{m=0}^{+\infty}\widehat{f}(\lambda_{m})\varphi_{m}(x).
\]

The main result of this paper is the following theorem.

\begin{theorem}
\label{main} There exists a positive constant $C$ such that for all integers
$N$ and $X$ and for all finite sequences of $N$ points in $\mathcal{M}$,
$\left\{  x\left(  j\right)  \right\}  _{j=1}^{N}$, and positive weights
$\left\{  a_{j}\right\}  _{j=1}^{N}$ we have%
\[
\sum_{m=0}^{X}\left\vert \sum_{j=1}^{N}a_{j}\varphi_{m}\left(  x\left(
j\right)  \right)  \right\vert ^{2}\geq\max\left\{  CX\sum_{j=1}^{N}a_{j}%
^{2},\left(  \sum_{j=1}^{N}a_{j}\right)  ^{2}\right\}  .
\]

\end{theorem}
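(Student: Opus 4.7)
Write $c_m = \sum_{j=1}^N a_j\varphi_m(x(j))$. The lower bound $\bigl(\sum_j a_j\bigr)^2$ is immediate from the single term $m=0$: $|c_0|^2 = \bigl(\sum_j a_j\bigr)^2$ since $\varphi_0 \equiv 1$ and the weights are positive. All the content of the theorem lies in the other bound, $\sum_{m=0}^X|c_m|^2 \geq CX\sum_j a_j^2$.

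I recast the left-hand side as the quadratic form
\[
\sum_{m=0}^X |c_m|^2 = \sum_{j,k} a_j a_k\, K_X(x(j), x(k)),
\]
where $K_X(x,y) = \sum_{m=0}^X \varphi_m(x)\overline{\varphi_m(y)}$ is the reproducing kernel of $V_X = \operatorname{span}\{\varphi_0,\ldots,\varphi_X\}$. The plan is to minorize $K_X$, in the sense of positive semidefinite kernels, by a Fej\'er-type kernel that is pointwise non-negative and whose diagonal is of order $X$; positivity of the $a_j$ then lets me drop all off-diagonal contributions.

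Fix an even non-negative bump $\psi\colon\mathbb{R}\to[0,\infty)$ supported in $[-1,1]$ with $\psi(0)>0$, and pick $R \in [\lambda_X, \lambda_{X+1})$; by Weyl's law $R^d \asymp X$, and $\psi(\lambda_m/R)=0$ for $m>X$. Set
\[
\tilde K(x,y) = \sum_m \psi(\lambda_m/R)\,\varphi_m(x)\overline{\varphi_m(y)}.
\]
Since $\|\psi\|_\infty - \psi(\lambda_m/R) \geq 0$ for $m \leq X$ and $\psi(\lambda_m/R)=0$ for $m>X$, the operator $\|\psi\|_\infty K_X - \tilde K$ is positive semidefinite, so
\[
\sum_{j,k} a_j a_k\,\tilde K(x(j),x(k)) \leq \|\psi\|_\infty \sum_{m=0}^X |c_m|^2.
\]
It thus suffices to prove $\sum_{j,k} a_j a_k\,\tilde K(x(j),x(k)) \geq cR^d\sum_j a_j^2$.

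Granted pointwise positivity $\tilde K(x,y)\geq 0$ on $\mathcal{M}\times\mathcal{M}$ and the diagonal lower bound $\tilde K(x,x)\geq cR^d$ (a smoothed pointwise Weyl law for $\psi$), positivity of the $a_j$ allows me to discard the off-diagonal terms and conclude
\[
\sum_{j,k} a_j a_k\,\tilde K(x(j),x(k)) \geq \sum_j a_j^2\,\tilde K(x(j),x(j)) \geq cR^d\sum_j a_j^2 \asymp cX\sum_j a_j^2.
\]
The main obstacle is precisely this pointwise positivity of $\tilde K$ on an arbitrary Riemannian manifold, where no closed-form kernel is available. My approach is to pick $\psi$ so that its radial profile $\psi(|\cdot|)$ on $\mathbb{R}^d$ is positive definite in the Bochner sense---a Schoenberg/Askey truncated power $\psi(\lambda) = (1-\lambda^2)_+^{\nu}$ with $\nu$ sufficiently large works---so that the inverse $d$-dimensional radial Fourier transform $\check\psi$ is non-negative. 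Then, via the Hadamard parametrix for $\cos(t\sqrt\Delta)$ combined with the wave representation $\psi(R^{-1}\sqrt\Delta) = \frac{1}{2\pi}\int \hat\psi(s)\cos(sR^{-1}\sqrt\Delta)\,ds$, one shows that in normal coordinates near any $x\in\mathcal{M}$ the kernel $\tilde K(x,y)$ agrees, modulo remainders of lower order in $R$, with the non-negative Euclidean kernel $R^d\check\psi(R\,d(x,y))$. For $X$ (hence $R$) large enough the curvature corrections are absorbed, and the small-$X$ cases are covered by the trivial bound $\bigl(\sum_j a_j\bigr)^2 \geq \sum_j a_j^2$.
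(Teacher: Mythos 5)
Your reduction of the problem to a quadratic form and your first idea---replace the sharp spectral cutoff $K_X$ by a smooth spectral multiplier $\tilde K$ and use $\|\psi\|_\infty K_X - \tilde K \succeq 0$---match the opening moves of the paper. But the central claim your argument rests on, namely that $\tilde K(x,y)\geq 0$ \emph{pointwise on all of} $\mathcal{M}\times\mathcal{M}$, cannot be established by the parametrix argument you invoke, and this is precisely the obstruction the paper is designed to circumvent. The Hadamard parametrix gives $\tilde K(x,y) \approx R^d\check\psi(R\,d(x,y)) + (\text{remainders})$ where the remainders are indeed of \emph{lower order in $R$ uniformly in $(x,y)$}, e.g.\ $O(R^{d-2})$. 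But the main term decays rapidly off the diagonal: for $d(x,y)\gg R^{-1}$ it is $\lesssim R^d(R\,d(x,y))^{-M}$ for every $M$. So as soon as $d(x,y) \gtrsim R^{-1+\delta}$, the remainders---which have no definite sign---can swamp the main term, and pointwise non-negativity of $\tilde K$ is lost. (This is also why simply ``discarding all off-diagonal terms'', which requires global non-negativity of the kernel, cannot be the mechanism here; a kernel with spectral support in $\{\lambda_m\leq\lambda_X\}$ on a general manifold does not admit a positive representative.)

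The paper's strategy is structurally different in two ways, and both are necessary. First, only the \emph{leading} Hadamard term $F_1 = \tfrac12\alpha_0\,\Omega_0$ is shown to be pointwise non-negative (because $\Omega_0$ turns out to be exactly a $d$-dimensional radial Fourier transform of the non-negative multiplier $\mathcal{F}_d\widetilde H$, and $\alpha_0>0$), while the lower-order Hadamard terms $F_2,F_3$, the parametrix remainder $F_4$, and the mollification error $F_5$ have no sign. Second---and this is the step absent from your plan---the paper partitions $\mathcal{M}$ into $\asymp X$ equal-measure cells $\mathcal{B}_r$ and works with the cell sums $S_r = \sum_{x(j)\in\mathcal B_r} a_j$ rather than the raw weights: non-negativity of $F_1$ is used \emph{only} to drop cross-cell terms, leaving within-cell contributions where $d(x,y)\lesssim X^{-1/d}$ and $F_1\gtrsim X$; and the signed error kernels are estimated in quadratic-form sense against $\sum_r S_r^2$ using decay of the kernels and a dyadic decomposition in the distance between cells. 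Since $\sum_r S_r^2\geq\sum_j a_j^2$, this yields the sharp $X$ rather than $X/(\log X)^{d/2}$. Without the cell decomposition and the careful bookkeeping of the error terms against $\sum_r S_r^2$, there is no way to close the argument; ``the curvature corrections are absorbed'' is hiding exactly the work that remains.

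Two secondary issues. You propose $\psi(\lambda)=(1-\lambda^2)_+^\nu$, which is not smooth at $\lambda=1$, so its cosine transform decays only polynomially and the wave representation would mix in arbitrarily large times; the parametrix is only valid for $|t|<\varepsilon$, so one must mollify to force small time support (the paper does this with $\widetilde H = H(\cdot/\lambda_X)\ast_d\eta$), and this introduces the error $F_5$ which then also needs to be estimated. Finally, your plan needs the diagonal lower bound $\tilde K(x,x)\gtrsim R^d$, which is a pointwise Weyl law with uniform constants over $\mathcal{M}$; this is fine, but it only feeds into the (unavailable) pointwise-positivity route.
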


Notice that the estimate
\[
\sum_{m=0}^{X}\left\vert \sum_{j=1}^{N}a_{j}\varphi_{m}\left(  x\left(
j\right)  \right)  \right\vert ^{2}\geq\left(  \sum_{j=1}^{N}a_{j}\right)
^{2}%
\]
is immediately obtained since for $m=0$ one has $\varphi_{0}\left(  x\right)
=1$ for all $x$ in $\mathcal{M}$. The essential part of the theorem is
therefore the estimate%
\begin{equation}
\sum_{m=0}^{X}\left\vert \sum_{j=1}^{N}a_{j}\varphi_{m}\left(  x\left(
j\right)  \right)  \right\vert ^{2}\geq CX\sum_{j=1}^{N}a_{j}^{2}.
\label{essential}%
\end{equation}

When $\mathcal{M}$ is the one-dimensional torus, the above theorem is
classical and goes back to the work of J. W. S. Cassels \cite{C}. This was
later extended to the higher dimensional torus by H. L. Montgomery, see e.g.
his book \cite{M}, or \cite{Giancarlo}. Recently, D. Bilyk, F. Dai, S.
Steinerberger \cite{BDS} extend the Cassels-Montgomery inequality to the case
of smooth compact $d$-dimensional Riemannian manifolds without boundary. More
precisely they show that there exists a positive constant $C$ such that for
all integers $N$ and $X$ and for all finite sequences of $N$ points in
$\mathcal{M}$, $\left\{  x\left(  j\right)  \right\}  _{j=1}^{N}$, and
positive weights $\{a_{j}\}_{j=1}^{N}$,%
\[
\sum_{m=0}^{X}\left\vert \sum_{j=1}^{N}a_{j}\varphi_{m}\left(  x\left(
j\right)  \right)  \right\vert ^{2}\geq C\frac{X}{(\log X)^{d/2}}\sum
_{j=1}^{N}a_{j}^{2}.
\]
This result should be compared with the following simple proposition.

\begin{proposition}
Let $X$ and $N$ be positive integers. For all positive weights $\{a_{j}%
\}_{j=1}^{N}$, there exists a sequence of points $\left\{  x\left(  j\right)
\right\}  _{j=1}^{N}$ in $\mathcal{M}$ such that%
\[
\sum_{m=0}^{X}\left\vert \sum_{j=1}^{N}a_{j}\varphi_{m}\left(  x\left(
j\right)  \right)  \right\vert ^{2}\leq X\sum_{j=1}^{N}a_{j}^{2}+\left(
\sum_{j=1}^{N}a_{j}\right)  ^{2}.
\]

\end{proposition}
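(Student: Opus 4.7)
The plan is to prove the proposition by a probabilistic averaging argument. I choose the points $x(1),\dots,x(N)$ independently at random, each with distribution equal to the normalized Riemannian measure $\mu$, and then compute the expectation of the quantity
\[
\sum_{m=0}^{X}\left\vert \sum_{j=1}^{N}a_{j}\varphi_{m}\left(x(j)\right)\right\vert^{2}.
\]
Once this expectation is shown to be at most $X\sum_{j=1}^{N}a_{j}^{2}+\bigl(\sum_{j=1}^{N}a_{j}\bigr)^{2}$, the existence of a deterministic choice of points satisfying the claimed bound follows immediately, since at least one realization of the random points must give a value no larger than the mean.

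The key step is to expand the square and exploit independence. For a fixed $m$, writing
\[
\left\vert \sum_{j=1}^{N}a_{j}\varphi_{m}(x(j))\right\vert^{2}
=\sum_{j=1}^{N}a_{j}^{2}\,\vert\varphi_{m}(x(j))\vert^{2}
+\sum_{j\neq k}a_{j}a_{k}\,\varphi_{m}(x(j))\overline{\varphi_{m}(x(k))},
\]
and using the independence of the $x(j)$'s together with $\int_{\mathcal{M}}\vert\varphi_{m}\vert^{2}\,d\mu=1$, I get
\[
\mathbb{E}\left[\left\vert \sum_{j=1}^{N}a_{j}\varphi_{m}(x(j))\right\vert^{2}\right]
=\sum_{j=1}^{N}a_{j}^{2}+\Bigl\vert\widehat{1}(\lambda_{m})\Bigr\vert^{2}\sum_{j\neq k}a_{j}a_{k},
\]
where $\widehat{1}(\lambda_{m})=\int_{\mathcal{M}}\overline{\varphi_{m}}\,d\mu$. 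Here I use orthonormality crucially: because $\varphi_{0}\equiv 1$, one has $\widehat{1}(\lambda_{m})=0$ for $m\geq 1$ and $\widehat{1}(\lambda_{0})=1$.

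Thus the $m=0$ term contributes exactly $\bigl(\sum_{j}a_{j}\bigr)^{2}$ (which is in fact deterministic, as $\varphi_{0}\equiv 1$), while each of the $X$ terms with $1\le m\le X$ contributes exactly $\sum_{j}a_{j}^{2}$. Summing gives
\[
\mathbb{E}\left[\sum_{m=0}^{X}\left\vert \sum_{j=1}^{N}a_{j}\varphi_{m}(x(j))\right\vert^{2}\right]
=\Bigl(\sum_{j=1}^{N}a_{j}\Bigr)^{2}+X\sum_{j=1}^{N}a_{j}^{2},
\]
and the existence of a point configuration with left-hand side at most this value concludes the proof.

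There is essentially no obstacle here: the only subtle point is the bookkeeping for $m=0$ versus $m\geq 1$, which is handled cleanly by the orthogonality of $\varphi_{m}$ to the constant function $\varphi_{0}$. The expectation computation is exact (not merely an inequality), which is why the bound in the proposition matches the quantity appearing in Theorem~\ref{main} up to the constant $C$ and thereby shows the dependence on $X$ in \eqref{essential} cannot be improved.
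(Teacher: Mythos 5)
Your proof is correct and is essentially the paper's own argument in probabilistic language: computing the expectation over i.i.d.\ points drawn from $\mu$ is exactly the paper's integration of $\Phi(y_1,\ldots,y_N)$ over $\mathcal{M}^N$, and the bookkeeping for $m=0$ (deterministic term) versus $m\ge 1$ (cross terms vanish by orthogonality) matches as well.
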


\begin{proof}
Let%
\[
\Phi\left(  y_{1},\ldots,y_{N}\right)  =\sum_{m=1}^{X}\left\vert \sum
_{j=1}^{N}a_{j}\varphi_{m}\left(  y_{j}\right)  \right\vert ^{2}=\sum
_{m=1}^{X}\sum_{j,k=1}^{N}a_{j}a_{k}\varphi_{m}\left(  y_{j}\right)
\overline{\varphi_{m}\left(  y_{k}\right)  }.
\]
Since for $m\neq0$%
\[
\int_{\mathcal{M}}\varphi_{m}\left(  y_{j}\right)  dy_{j}=0,
\]
if $j\neq k$ we have%
\[
\int_{\mathcal{M}}\cdots\int_{\mathcal{M}}\varphi_{m}\left(  y_{j}\right)
\overline{\varphi_{m}\left(  y_{k}\right)  }dy_{1}\cdots dy_{N}=\int
_{\mathcal{M}}\int_{\mathcal{M}}\varphi_{m}\left(  y_{j}\right)
\overline{\varphi_{m}\left(  y_{k}\right)  }dy_{j}dy_{k}=0,
\]
while%
\[
\int_{\mathcal{M}}\cdots\int_{\mathcal{M}}\varphi_{m}\left(  y_{j}\right)
\overline{\varphi_{m}\left(  y_{j}\right)  }dy_{1}\cdots dy_{N}=\int
_{\mathcal{M}}\left\vert \varphi_{m}\left(  y_{j}\right)  \right\vert
^{2}dy_{j}=1.
\]
Hence,%
\begin{align*}
&  \int_{\mathcal{M}}\cdots\int_{\mathcal{M}}\Phi\left(  y_{1},\ldots
,y_{N}\right)  dy_{1}\cdots dy_{N}\\
&  =\sum_{m=1}^{X}\sum_{j=1}^{N}a_{j}^{2}\int_{\mathcal{M}}\left\vert
\varphi_{m}\left(  y_{j}\right)  \right\vert ^{2}dy_{j}=X\sum_{j=1}^{N}%
a_{j}^{2}.
\end{align*}
Therefore there exist points $\left\{  x\left(  j\right)  \right\}  _{j=1}%
^{N}$ such that%
\[
\Phi\left(  x\left(  1\right)  ,\ldots,x\left(  N\right)  \right)  \leq
X\sum_{j=1}^{N}a_{j}^{2}.
\]

\end{proof}

Our goal is therefore to remove the logarithmic loss in the above result of
Bilyk, Dai and Steinerberger, thus obtaining a sharp estimate.

The original proof by Montgomery in the case of the torus uses the Fej\'{e}r
kernel. A direct adaptation of this proof to the case of a general manifold
would require to construct a positive kernel of the form
\[
\sum_{m=0}^{X}c_{m}\varphi_{m}(x)\overline{\varphi_{m}(y)},
\]
but unfortunately this type of kernels is not available in a general manifold.
One could therefore withdraw, for example, the requirement that the spectrum
of the kernel be contained in the set $\{\lambda_{0}^{2},\ldots,\lambda
_{X}^{2}\}$. This is the strategy followed by Bilyk, Dai and Steinerberger
which use the heat kernel. Our strategy here is on the contrary to use a
kernel which is positive up to a negligible error, without dropping the
spectrum condition. The existence of such type of kernel can be proved by
means of the Hadamard parametrix for the wave operator on the manifold. In the
next section we introduce this construction.

\section{The Hadamard parametrix for the wave equation}

Following \cite[III, \S 17.4]{Hormander}, for $\nu=0,\,1,\,2,\ldots$, let us
call $E_{\nu}\left(  t,x\right)  $ the distribution defined as the inverse
Fourier-Laplace transform on $\mathbb{R}^{d+1}$ of $\nu!\left(  \left\vert
\xi\right\vert ^{2}-\tau^{2}\right)  ^{-\nu-1}$,
\[
E_{\nu}\left(  t,x\right)  =\nu!\left(  2\pi\right)  ^{-d-1}\int
_{\operatorname{Im}\tau=c<0}e^{i\left(  x\cdot\xi+t\tau\right)  }\left(
\left\vert \xi\right\vert ^{2}-\tau^{2}\right)  ^{-\nu-1}d\xi d\tau.
\]
Note that for $\nu=0$, this is exactly the fundamental solution of the wave
operator, see \cite[I, \S 6.2]{Hormander}. The next Proposition (see
\cite[III, Lemma 17.4.2]{Hormander}) gives more information about the
distributions $E_{\nu}$.

\begin{proposition}
\label{Lemma Hormander}~

\begin{itemize}
\item[(i)] $E_{\nu}$ is a homogeneous distribution of degree $2\nu-d+1$
supported in the forward light cone $\left\{  \left(  t,x\right)
\in\mathbb{R}^{1+d}:t>0,t^{2}\geq\left\vert x\right\vert ^{2}\right\}  $.

\item[(ii)] Moreover%
\[
E_{\nu}\left(  t,x\right)  =2^{-2\nu-1}\pi^{\left(  1-d\right)  /2}\chi
_{+}^{\nu+\left(  1-d\right)  /2}\left(  t^{2}-\left\vert x\right\vert
^{2}\right),  ~~~t>0,
\]
and $E_{\nu}\left(  t,x\right)  $ can be regarded as a smooth function of
$t>0$ with values in $\mathcal{D}^{\prime}\left(  \mathbb{R}^{d}\right)  $. In
particular if $\psi\in C_{0}^{\infty}\left(  \mathbb{R}^{1+d}\right)  $ then%
\[
\left\langle E_{\nu},\psi\right\rangle =2^{-2\nu-1}\pi^{\left(  1-d\right)
/2}\int_{0}^{+\infty}\left\langle \chi_{+}^{\nu+\left(  1-d\right)  /2}\left(
t^{2}-\left\vert \cdot\right\vert ^{2}\right)  ,\psi\left(  t,\cdot\right)
\right\rangle dt.
\]
Also%
\[
\partial_{t}^{k}E_{\nu}\left(  0^{+},\cdot\right)  =0~\text{for }k\leq2\nu
\]
and%
\[
\partial_{t}^{2\nu+1}E_{\nu}\left(  0^{+},\cdot\right)  =\nu!\delta_{0}.
\]

\item[(iii)] Finally, setting%
\[
\left\langle \check{E}_{\nu},\varphi\right\rangle :=\left\langle E_{\nu
},\check{\varphi}\right\rangle ,
\]
where
\[
\check{\varphi}\left(  t,x\right)  =\varphi\left(  -t,x\right)  ,
\]
the distributions $\left(  E_{\nu}-\check{E}_{\nu}\right)  \left(  t,x\right)
$ and $\partial_{t}\left(  E_{\nu}-\check{E}_{\nu}\right)  \left(  t,x\right)
$ can be regarded as continuous radial functions of $x$ with values in
$\mathcal{D}^{\prime}\left(  \mathbb{R}\right)  $. With a small abuse of
notation we will write $\left(  E_{\nu}-\check{E}_{\nu}\right)  \left(
\cdot,\left\vert x\right\vert \right)  $ and $\partial_{\cdot}\left(  E_{\nu
}-\check{E}_{\nu}\right)  \left(  \cdot,\left\vert x\right\vert \right)  $.
\end{itemize}
\end{proposition}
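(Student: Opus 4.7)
The plan is to follow Hörmander's treatment closely, working directly with the defining oscillatory integral. For (i), I would first establish homogeneity via the change of variables $(\xi,\tau)\to(\lambda\xi,\lambda\tau)$ with $\lambda>0$: the Jacobian contributes $\lambda^{d+1}$, the symbol $(|\xi|^2-\tau^2)^{-\nu-1}$ contributes $\lambda^{-2\nu-2}$, and the contour $\operatorname{Im}\tau=c$ can be rescaled to $\operatorname{Im}\tau=c/\lambda$ without altering the value, since the poles of the symbol sit on the real axis in $\tau$. This yields the claimed degree $2\nu-d+1$. For the support, I would shift the $\tau$-contour toward $\operatorname{Im}\tau\to-\infty$: when $t<0$ the factor $e^{it\tau}$ decays in the lower half-plane and no poles are crossed, so $E_\nu\equiv0$ for $t<0$. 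The finer conic statement $|x|\le t$ then follows either by a Paley--Wiener argument exploiting analytic extension of the symbol in a tube over the space-like region, or by invoking finite propagation speed together with the identity $\square^{\nu+1}E_\nu=\delta_{0}$.

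For (ii), the main computation is the $\tau$-integration by residues. For $t>0$ I would close the contour in the upper half-plane, picking up the two poles at $\tau=\pm|\xi|$ of order $\nu+1$. The resulting residues produce an integrand which is a polynomial in $|\xi|,t$ times $e^{\pm it|\xi|}$, divided by a suitable power of $|\xi|$. The remaining $\xi$-integral is radial, and by combining Bochner's formula for Fourier transforms of radial distributions with the classical Riesz identity for the Fourier transform of powers of the Lorentz form one recognizes
\[
E_{\nu}(t,x)=2^{-2\nu-1}\pi^{(1-d)/2}\chi_{+}^{\nu+(1-d)/2}\bigl(t^{2}-|x|^{2}\bigr).
\]
The smoothness of $t\mapsto E_\nu(t,\cdot)$ with values in $\mathcal{D}'(\mathbb{R}^d)$ for $t>0$ is then immediate. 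The initial values at $t=0^+$ can be extracted either by Taylor-expanding $\chi_{+}^{a}(t^{2}-|x|^{2})$ in $t$ and using the distributional identities $\partial_{s}\chi_{+}^{a}=a\chi_{+}^{a-1}$ and $\chi_{+}^{-k}=\delta_{0}^{(k-1)}$, or more conceptually by combining $\square^{\nu+1}E_{\nu}=\delta_{0}$ with the support condition from (i) and matching the orders of vanishing of $E_\nu$ and its $t$-derivatives at the tip of the cone.

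For (iii), $\check{E}_{\nu}$ is simply the reflection $E_{\nu}(-t,x)$, so it is supported in $\{t\le0\}$. The vanishing of $\partial_t^k E_\nu(0^+,\cdot)$ for $k\le 2\nu$ from (ii), together with the analogous vanishing (with opposite sign for odd $k$) of $\partial_t^k\check{E}_\nu(0^-,\cdot)$, guarantees that the piecewise distribution $E_\nu-\check{E}_\nu$ extends continuously in $t$ across $t=0$, and the same holds for its first $t$-derivative (the jump of order $2\nu+1$ in $E_\nu$ at $t=0^+$ is matched by the corresponding jump of $\check{E}_\nu$ of opposite sign). Radial dependence on $x$ is inherited directly from the explicit formula in (ii), which only involves $|x|$ through $t^{2}-|x|^{2}$; continuity in $x$ of the resulting $\mathcal{D}'(\mathbb{R}_{t})$-valued map is then read off from this formula.

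The main obstacle I anticipate is the residue and Fourier-transform step in (ii): keeping careful track of the numerical constants, of the half-integer index of $\chi_{+}^{a}$ when $d$ is even, and of the correct distributional interpretation of Riesz-type functions for negative indices requires the full machinery of homogeneous distributions. The strengthening of the support statement in (i) from $\{t\ge0\}$ to the full forward light cone is the secondary technical point; both can ultimately be bypassed by appealing to Hörmander's proof, which is the reference the paper points to.
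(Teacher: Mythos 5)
The paper never actually proves Proposition \ref{Lemma Hormander}: it is imported verbatim from H\"ormander [III, Lemma 17.4.2] and the proof is delegated entirely to that reference, so there is no in-paper argument to compare yours against. Your sketch reproduces H\"ormander's route and is, in broad strokes, correct --- contour deformation and Paley--Wiener for (i), residues plus the radial Fourier (Riesz/Bochner) identity for (ii), odd symmetrisation for (iii).

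The one place you gloss over the crux is (iii). What must be shown is that $x\mapsto(E_\nu-\check{E}_\nu)(\cdot,x)$ is continuous in the \emph{space} variable with values in $\mathcal{D}'(\mathbb{R}_t)$. For fixed $x\neq 0$ the pullback $\chi_+^{\nu+(1-d)/2}(t^2-|x|^2)$ is a well-defined distribution in $t$ because $t\mapsto t^2-|x|^2$ is a submersion where $\chi_+$ is singular, and it vanishes in a neighbourhood of $t=0$; so there is no gluing problem across $t=0$ for $x\neq 0$. The delicate point is the limit $x\to 0$, where that map degenerates at the tip of the cone. It is there --- not in a jump across $t=0$ --- that the odd $t$-symmetry of $E_\nu-\check{E}_\nu$ (equivalently, the vanishing of $\partial_t^k E_\nu(0^+,\cdot)$ for $k\le 2\nu$ that you invoke) is actually used, to make the pullback extend continuously to $x=0$. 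Your closing sentence (``continuity in $x$ \dots is then read off from this formula'') is where the real argument belongs; it should not be an afterthought to the $t=0$ matching.
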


Let us clarify the meaning of the objects that appear in this proposition. Let
$\alpha\in\mathbb{C}$ be such that $\operatorname{Re}\alpha>-1$ and for every
test function $\varphi\in C_{0}^{\infty}\left(  \mathbb{R}\right)  $ define
the distribution $\chi_{+}^{\alpha}$ as%
\[
\left\langle \chi_{+}^{\alpha},\varphi\right\rangle =\frac{1}{\Gamma\left(
\alpha+1\right)  }\int_{0}^{+\infty}x^{\alpha}\varphi\left(  x\right)  dx.
\]
Integration by parts immediately gives
\[
\left\langle \chi_{+}^{\alpha},\varphi\right\rangle =-\left\langle \chi
_{+}^{\alpha+1},\varphi^{\prime}\right\rangle
\]
so that $\chi_{+}^{\alpha}$ can be extended to all $\alpha$ with
$\operatorname{Re}\alpha>-2$, and, repeating the argument, to the whole
complex plane (see \cite[I, \S 3.2]{Hormander} for the details).

Also, since the function $f(x,t)=t^{2}-|x|^{2}$ is a submersion of
$\mathbb{R}^{d+1}\setminus\{0\}$ in $\mathbb{R}$, then the pull-back $\chi
_{+}^{\alpha}(t^{2}-|x|^{2}):=f^{\ast}(\chi_{+}^{\alpha})\in\mathcal{D}%
^{\prime}(\mathbb{R}^{d+1}\setminus\{0\})$ is defined by the identity
\[
\left\langle f^{\ast}(\chi_{+}^{\alpha}),\varphi\right\rangle :=\left\langle
\chi_{+}^{\alpha},\int_{f^{-1}(\cdot)}\frac{\varphi(x,t)}{\Vert\nabla
f(x,t)\Vert}d\sigma(x,t)\right\rangle .
\]
We observe that by \cite[I, Theorem 3.23]{Hormander} the distribution
$\chi_{+}^{\nu+\left(  1-d\right)  /2}(t^{2}-|x|^{2})$ can be uniquely
extended to $\mathcal{D}^{\prime}(\mathbb{R}^{d+1})$ for $\nu=0,1,\ldots$.

Recall that distributions in $\mathcal{D}^{\prime}(\mathcal{M})$ can always be written as $u=\sum_{m=0}^{+\infty} c_m \varphi_m$, where the sequence $\{c_m\}$ is slowly increasing. Their action on smooth functions is given by
\[
\langle u,\phi\rangle=
 \sum_{m=0}^{+\infty}c_{m}\int\phi
{\varphi_{m}}.
\]
Consider the continuous linear map $\mathcal{K}_{t}:\mathcal{D}(\mathcal{M}%
)\rightarrow\mathcal{D}^{\prime}(\mathcal{M})$ defined by%
\[
\phi\mapsto\mathcal{K}_{t}\phi=\sum_{m}\cos\left(  \lambda_{m}t\right)
\widehat{\phi}\left(  \lambda_{m}\right)  \varphi_{m}.
\]
Observe that $\mathcal{K}_{t}\phi$ is in fact a smooth function and it is the
solution of the following Cauchy problem for the wave equation%
\[
\left\{
\begin{array}
[c]{l}%
\left(  \dfrac{\partial^{2}}{\partial t^{2}}+\Delta_{x}\right)  w\left(
t,x\right)  =0\\
w\left(  0,x\right)  =\phi\left(  x\right)  \\
\dfrac{\partial w}{\partial t}\left(  0,x\right)  =0.
\end{array}
\right.
\]
By the Schwartz kernel Theorem (see \cite[I, Theorem 5.2.1]{Hormander}), there
exists one and only one distribution $\cos(t\sqrt{\Delta})(x,y)\in
\mathcal{D}^{\prime}(\mathcal{M}\times\mathcal{M})$, called kernel of the map
$\mathcal{K}_{t}$, such that%
\begin{align*}
\langle\cos(t\sqrt{\Delta})(x,y),\eta(x)\phi(y)\rangle &  =\langle
\mathcal{K}_{t}\phi,\eta\rangle=\sum_{m}\cos\left(  \lambda_{m}t\right)
\widehat{\phi}\left(  \lambda_{m}\right)  \left\langle \varphi_{m}%
,\eta\right\rangle \\
&  =\sum_{m}\cos\left(  \lambda_{m}t\right)  \int\phi\left(  y\right)
\overline{\varphi_{m}\left(  y\right)  }dy\int\eta\left(  x\right)
\varphi_{m}\left(  x\right)  dx.
\end{align*}
This immediately implies that%
\[
\cos(t\sqrt{\Delta})(x,y)=\sum_{m}\cos(\lambda_{m}t)\varphi_{m}%
(x)\overline{\varphi_{m}(y)},
\]
and the identity is of course in the sense of distributions in $\mathcal{D}%
^{\prime}(\mathcal{M}\times\mathcal{M})$.

Hadamard's construction of the parametrix for the wave operator allows to
describe for small values of time $t$ the singularities of $\cos\left(
t\sqrt{\Delta}\right)  \left(  x,y\right)  $.

\begin{theorem}
[{see {\cite[Theorem 3.1.5]{sogge}}}]\label{sogge}Given a $d$-dimensional
Riemannian manifold $\left(  \mathcal{M},g\right)  $, there exists
$\varepsilon>0$ and functions $\alpha_{\nu}\in\mathcal{C}^{\infty}%
(\mathcal{M}\times\mathcal{M})$, so that if $Q>d+3$ the following holds. Let%
\[
K_{Q}\left(  t,x,y\right)  =\sum_{\nu=0}^{Q}\alpha_{\nu}\left(  x,y\right)
\partial_{t}\left(  E_{\nu}-\check{E}_{\nu}\right)  \left(  t,d\left(
x,y\right)  \right)
\]
and%
\[
R_{Q}\left(  t,x,y\right)  =\cos\left(  t\sqrt{\Delta}\right)  \left(
x,y\right)  -K_{Q}\left(  t,x,y\right)  ,
\]
then $R_{Q}\in\mathcal{C}^{Q-d-3}\left(  \left[  -\varepsilon,\varepsilon
\right]  \times\mathcal{M}\times\mathcal{M}\right)  $ and%
\[
\left\vert \partial_{t,x,y}^{\beta}R_{Q}\left(  t,x,y\right)  \right\vert \leq
C\left\vert t\right\vert ^{2Q+2-d-\left\vert \beta\right\vert }.
\]
Furthermore $\alpha_{0}\left(  x,y\right)  >0$ in $\mathcal{M}\times
\mathcal{M}$.
\end{theorem}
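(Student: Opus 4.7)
The plan is to follow Hadamard's classical construction: work near the diagonal in geodesic normal coordinates, determine the $\alpha_\nu$ by a hierarchy of transport equations along geodesics so that $(\partial_t^2+\Delta_x)K_Q$ is as smooth as possible, and then estimate the remaining Riesz-type distribution for large $Q$. Fix $y\in\mathcal{M}$ and work in geodesic normal coordinates centered at $y$ on a ball of radius smaller than the injectivity radius; then $d(x,y)=|x|$, and for a function $F$ of $r=|x|$ alone one has $\Delta_g F(r)=\Delta_{\mathbb{R}^d}F(r)+\Theta^{-1}(\partial_r\Theta)F'(r)$, where $\Theta(x,y)$ is the Jacobian of $\exp_y$. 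From the Fourier definition, $E_\nu$ satisfies the Euclidean identity $(\partial_t^2-\Delta_{\mathbb{R}^d})E_\nu=E_{\nu-1}$ with $E_{-1}=\delta_0$. Applying $\partial_t^2+\Delta_x$ term by term to $K_Q$ and using Leibniz produces, after replacing $\Delta_g$ by $\Delta_{\mathbb{R}^d}$ plus the $\Theta$-perturbation, three kinds of contributions per summand: $\alpha_\nu\,\partial_t(E_{\nu-1}-\check{E}_{\nu-1})$ coming from the Euclidean identity; a first-order transport operator in $\alpha_\nu$ acting on $\partial_t(E_\nu-\check{E}_\nu)$ arising from cross derivatives on $r=d(x,y)$; and $(\Delta_x\alpha_\nu)\,\partial_t(E_\nu-\check{E}_\nu)$, of strictly lower singular order.

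Requiring that, in the sum over $\nu$, these contributions cancel order by order in the degree of singularity of $E_\nu$ yields a system of transport ODEs along the geodesics emanating from $y$. The leading equation is solved explicitly by the van Vleck--Morette amplitude $\alpha_0(x,y)=\Theta(x,y)^{-1/2}$, which is smooth and strictly positive on a neighborhood of the diagonal; each subsequent $\alpha_\nu$ is obtained by integrating a linear transport equation with source $\Delta_x\alpha_{\nu-1}$. Multiplying by a smooth cutoff supported in a tubular neighborhood of the diagonal and extending by zero produces $\alpha_\nu\in C^\infty(\mathcal{M}\times\mathcal{M})$; since $E_\nu$ is supported in $r\leq t$, the value of $K_Q$ is unaffected for $|t|<\varepsilon$ sufficiently small. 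The Cauchy data of $\cos(t\sqrt{\Delta})(x,y)$ at $t=0$ are matched using Proposition \ref{Lemma Hormander}: the vanishing $\partial_t^k E_\nu(0^+,\cdot)=0$ for $k\leq 2\nu$ together with $\partial_t^{2\nu+1}E_\nu(0^+,\cdot)=\nu!\,\delta_0$ show that only the $\nu=0$ term contributes at $t=0$, fixing the normalization $\alpha_0(y,y)=1$.

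For the remainder, the construction is arranged so that $(\partial_t^2+\Delta_x)K_Q$ reduces, after all cancellations, to a single leftover term of the form $(\Delta_x\alpha_Q)\,\partial_t(E_Q-\check{E}_Q)(t,d(x,y))$, whose distributional kernel is, via Proposition \ref{Lemma Hormander}, a constant times a derivative of $\chi_+^{Q+(1-d)/2}(t^2-d(x,y)^2)$. For $Q>d+3$ this exponent is large enough that the expression is a $C^{Q-d-3}$ function of $(t,x,y)$ vanishing to order $2Q+1-d$ on $\{t=d(x,y)=0\}$. Combined with the matched Cauchy data, standard wave-equation regularity applied to the inhomogeneous problem satisfied by $R_Q$ then yields $R_Q\in C^{Q-d-3}([-\varepsilon,\varepsilon]\times\mathcal{M}\times\mathcal{M})$ with $|\partial^\beta R_Q|\leq C|t|^{2Q+2-d-|\beta|}$.

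The main obstacle is the algebraic bookkeeping involved in decomposing $(\partial_t^2+\Delta_g)\bigl(\alpha_\nu\,\partial_t(E_\nu-\check{E}_\nu)(t,d(x,y))\bigr)$ into the three contributions described above and in reading off the transport equations: it requires careful tracking of the distributional derivatives of the pull-back $\chi_+^\alpha(t^2-|x|^2)$ and of their smooth dependence on the base point $y$. It is also the step where the Riemannian geometry enters (through $\Theta$), and where the positivity of $\alpha_0$ becomes manifest from its explicit formula as a power of the volume density.
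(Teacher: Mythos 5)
The paper itself does not prove this theorem; it is cited verbatim as \cite[Theorem 3.1.5]{sogge} and used as a black box, so there is no internal argument to compare against. Your sketch does, however, correctly reproduce the standard Hadamard parametrix construction that the cited source carries out: passage to geodesic normal coordinates about $y$, the Euclidean recursion for $E_\nu$ under the d'Alembertian, transport equations along radial geodesics for the $\alpha_\nu$ obtained by cancelling the top-order singularities, the van Vleck--Morette amplitude $\alpha_0=\Theta^{-1/2}>0$ (positivity being built into its explicit formula as an inverse square root of a Jacobian), and an inhomogeneous wave equation for the remainder $R_Q$. So the approach you outline is the right one and is, up to details, the same proof.

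Two points deserve correction or elaboration. First, the recursion you quote, $(\partial_t^2-\Delta_{\mathbb{R}^d})E_\nu=E_{\nu-1}$, is off by a multiplicative factor: since $\widehat{E_\nu}=\nu!\,(|\xi|^2-\tau^2)^{-\nu-1}$, one gets
\[
(\partial_t^2-\Delta_{\mathbb{R}^d})E_\nu=\nu\,E_{\nu-1}\quad(\nu\geq 1),\qquad (\partial_t^2-\Delta_{\mathbb{R}^d})E_0=\delta_0,
\]
and that factor $\nu$ propagates through the transport hierarchy and affects the normalization of each $\alpha_\nu$. (Relatedly, your formula for $\Delta_g$ on radial functions conflates conventions for the Jacobian $\Theta$; the correction to $\Delta_{\mathbb{R}^d}$ is $-\bigl(\partial_r\log\Theta-(d-1)/r\bigr)\partial_r$ in the paper's sign convention.) Second, the closing appeal to ``standard wave-equation regularity'' for $R_Q$ conceals the main quantitative content of the theorem: one must show that the solution of $(\partial_t^2+\Delta_x)R_Q=(\Delta_x\alpha_Q)\,\partial_t(E_Q-\check E_Q)(t,d(x,y))$ with vanishing Cauchy data, and right-hand side of limited smoothness vanishing to high order at $t=0$, is of class $C^{Q-d-3}$ with the precise decay $|\partial_{t,x,y}^\beta R_Q|\leq C|t|^{2Q+2-d-|\beta|}$. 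Establishing this requires finite propagation speed, energy estimates for the variable-coefficient wave operator, and Sobolev embedding, together with careful bookkeeping of how many derivatives are spent at each stage; this is where the hypothesis $Q>d+3$ enters, and none of it is a one-line citation. As a sketch your proposal captures the structure, but these are exactly the steps that must be filled in to constitute a proof.
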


Observe that $K_{Q}\left(  t,x,y\right) $, by Proposition \ref{Lemma Hormander} (iii), defines a distribution on
$\mathbb{R\times}\mathcal{M}\times\mathcal{M}$ via the identity%
\[
\left\langle K_{Q},\varphi\right\rangle =\iint_{\mathcal{M}\times\mathcal{M}%
}\left\langle K_{Q}\left(  \cdot,x,y\right)  ,\varphi\left(  \cdot,x,y\right)
\right\rangle dxdy.
\]
However this distribution describes the singularities of the kernel
$\cos\left(  t\sqrt{\Delta}\right)  \left(  x,y\right)  $ only for small time.

\section{Notations and Fourier transforms}

Let us introduce some notation. If $f$ and $g$ are integrable functions on
$\mathbb{R}^{d}$, we shall denote their convolution by%
\[
f\ast_{d}g(x)=\int_{\mathbb{R}^{d}}f(x-y)g(y)dy.
\]
We define the cosine transform of smooth even functions on $\mathbb{R}$ as%
\[
\mathcal{C}f\left(  t\right)  =\int_{0}^{\infty}f\left(  s\right)  \cos\left(
st\right)  ds
\]
with inverse%
\[
\mathcal{C}^{-1}f\left(  s\right)  =\frac{2}{\pi}\int_{0}^{\infty}f\left(
t\right)  \cos\left(  st\right)  dt.
\]
For smooth functions on $\mathbb{R}^{d}$ we will use a slightly different
normalization, and we define the Fourier transform and its inverse as
\begin{align*}
\mathcal{F}_{d}f\left(  \xi\right)   &  =\int_{\mathbb{R}^{d}}f\left(
x\right)  e^{-2\pi ix\cdot\xi}dx,\\
\mathcal{F}_{d}^{-1}f\left(  x\right)   &  =\int_{\mathbb{R}^{d}}f\left(
\xi\right)  e^{2\pi ix\cdot\xi}d\xi.
\end{align*}
For radial functions $f\left(  x\right)  =f_{0}\left(  \left\vert x\right\vert
\right)  $, the above Fourier transform reduces essentially to the Hankel
transform, given by (see. \cite[Chapter 4, Theorem 3.3]{SW})
\begin{align}
\mathcal{F}_{d}f\left(  \xi\right)   &  =2\pi\left\vert \xi\right\vert
^{-\frac{d-2}{2}}\int_{0}^{\infty}f_{0}\left(  s\right)  J_{\frac{d-2}{2}%
}\left(  2\pi\left\vert \xi\right\vert s\right)  s^{\frac{d}{2}}%
ds,\label{Hankel}\\
\mathcal{F}_{d}^{-1}f\left(  x\right)   &  =2\pi\left\vert x\right\vert
^{-\frac{d-2}{2}}\int_{0}^{\infty}f_{0}\left(  s\right)  J_{\frac{d-2}{2}%
}\left(  2\pi\left\vert x\right\vert s\right)  s^{\frac{d}{2}}ds.\nonumber
\end{align}
In the future, with an abuse of notation, we will identify the function $f$
with its radial profile $f_{0}$ and write $\mathcal{F}_{d}f\left(  \left\vert
\xi\right\vert \right)  $ instead of $\mathcal{F}_{d}f\left(  \xi\right)  .$
One can easily show that
\[
\mathcal{F}_{1}f\left(  t\right)  =2\mathcal{C}f\left(  2\pi t\right)  .
\]

In the proof of Theorem \ref{main} we need the inverse cosine transform of the
distribution $\partial_{t}\left(  E_{\nu}-\check{E}_{\nu}\right)  $. By
Proposition \ref{Lemma Hormander} (iii),  $\partial_{t}\left(  E_{\nu}%
-\check{E}_{\nu}\right)  \left(  t,z\right)  $ can be seen as a continuous
function of $z$ into $\mathcal{D}^{\prime}\left(  \mathbb{R}\right)  .$ In the
following Lemma we compute for every fixed $z$ the inverse cosine transform of
this distribution.

\begin{lemma}
\label{Trasf E_nu}Let $0\leq\nu<d/2$. For every $z\in\mathbb{R}^{d}$,
$\mathcal{C}^{-1}\left(  \partial_{\cdot}\left(  E_{\nu}-\check{E}_{\nu
}\right)  \left(  \cdot,z\right)  \right)  $ is a function and for all
$t\in\mathbb{R}$%
\begin{equation}
\mathcal{C}^{-1}\left(  \partial_{\cdot}\left(  E_{\nu}-\check{E}_{\nu
}\right)  \left(  \cdot,z\right)  \right)  \left(  t\right)  =\pi
^{-d/2}2^{-\nu-d/2}\left\vert t\right\vert ^{-2\nu-1+d}\frac{J_{-\nu
+d/2-1}\left(  t\left\vert z\right\vert \right)  }{\left(  t\left\vert
z\right\vert \right)  ^{-\nu+d/2-1}}. \label{identity}%
\end{equation}

\end{lemma}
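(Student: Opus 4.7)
The plan is to reduce the claimed identity, via the explicit formula for $E_{\nu}$ from Proposition \ref{Lemma Hormander}(ii), to a classical distributional cosine--Bessel integral, which I then establish using Poisson's representation of $J_{\mu}$ together with analytic continuation. First, by Proposition \ref{Lemma Hormander}(ii) and the fact that $E_{\nu}$ is supported in the forward cone, I can write $(E_{\nu}-\check{E}_{\nu})(s,z)=\operatorname{sign}(s)\cdot 2^{-2\nu-1}\pi^{(1-d)/2}\chi_{+}^{\nu+(1-d)/2}(s^{2}-|z|^{2})$ as an odd distribution in $s$ (for $z\neq 0$). Differentiating in $s$, the $2\delta(s)$ contribution from $\partial_{s}\operatorname{sign}(s)$ drops out because $\chi_{+}^{\alpha}(-|z|^{2})=0$ for $z\neq 0$; combining with $\partial_{x}\chi_{+}^{\alpha}=\chi_{+}^{\alpha-1}$ and the chain rule yields
\[
\partial_{s}(E_{\nu}-\check{E}_{\nu})(s,z)=2^{-2\nu}\pi^{(1-d)/2}\,|s|\,\chi_{+}^{\nu-(d+1)/2}(s^{2}-|z|^{2}),
\]
an even distribution in $s$.

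Since $\mathcal{C}$ and $\mathcal{C}^{-1}$ are mutually inverse, it suffices to apply $\mathcal{C}$ to the claimed right-hand side of \eqref{identity} and check equality with the preceding expression. Setting $\mu = d/2-\nu-1 > -1$ and cancelling constants, the task reduces to the distributional identity
\[
\int_{0}^{\infty} s^{\mu+1}\,J_{\mu}(s|z|)\,\cos(st)\,ds = 2^{\mu+1}\sqrt{\pi}\,|z|^{\mu}\,|t|\,\chi_{+}^{-\mu-3/2}(t^{2}-|z|^{2}).
\]
For $\mu>-1/2$ I would substitute Poisson's representation $J_{\mu}(x)=(x/2)^{\mu}(\sqrt{\pi}\,\Gamma(\mu+1/2))^{-1}\int_{-1}^{1}(1-u^{2})^{\mu-1/2}\cos(xu)\,du$, interchange integrals, apply the product-to-sum formula $\cos A\cos B=\tfrac{1}{2}(\cos(A-B)+\cos(A+B))$, and evaluate the inner $s$-integrals as distributional Fourier transforms of $s_{+}^{2\mu+1}$; a substitution $v=u|z|$ then recombines the pieces into the right-hand side. (One can sanity-check this in the elementary case $\mu=-1/2$, where $J_{-1/2}(x)=\sqrt{2/(\pi x)}\cos x$ and both sides reduce to $\sqrt{\pi/(2|z|)}\,(\delta(t-|z|)+\delta(t+|z|))$.)

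For the remaining range $-1<\mu\leq -1/2$ (equivalently $(d-1)/2\leq\nu<d/2$), analytic continuation in $\mu$ completes the proof: both sides depend holomorphically on $\mu$ as distribution-valued functions of $t$, since $\chi_{+}^{\alpha}$ is entire in $\alpha$ and the left-hand side is well-defined distributionally via the Poisson construction. Testing against any smooth test function $\varphi(t)$ with support bounded away from $t=0$ (which costs nothing, since both sides are supported in $|t|\geq|z|>0$) reduces the identity to an equality of holomorphic functions of $\mu$ that already holds for $\mu>-1/2$, and therefore everywhere. The main obstacle throughout is precisely this distribution-theoretic bookkeeping: the cosine integral converges only distributionally for general $\mu$, and the right-hand side is a genuine distribution when $-\mu-3/2<-1$, so each step of the Poisson-based manipulation must be justified by duality pairing rather than pointwise evaluation.
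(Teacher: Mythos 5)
Your proof is correct in outline, but it takes a genuinely different route from the paper's. The paper works directly: for $(d-1)/2<\nu<d/2$ the exponent $\nu-(d+1)/2$ lies in $(-1,-1/2)$, so $\partial_t(E_\nu-\check E_\nu)(t,z)$ is an ordinary (locally integrable, decaying) even function of $t$, and its cosine transform is a convergent one-variable integral that the authors simply look up in the Bateman tables; they then extend to all $\nu<d/2$ by analytic continuation (downward in $\nu$), citing Gel'fand--Shilov for why the cosine transform of a $\nu$-holomorphic family of tempered distributions is again $\nu$-holomorphic. You instead apply $\mathcal{C}$ to the claimed right-hand side, reducing the lemma to the Weber--Schafheitlin-type identity $\int_0^\infty s^{\mu+1}J_\mu(s|z|)\cos(st)\,ds=2^{\mu+1}\sqrt{\pi}|z|^\mu|t|\chi_+^{-\mu-3/2}(t^2-|z|^2)$ with $\mu=d/2-\nu-1$, establish it via Poisson's representation of $J_\mu$ for $\mu>-1/2$ (i.e.\ $\nu<(d-1)/2$), and then continue analytically upward to cover $(d-1)/2\le\nu<d/2$. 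Both directions of continuation are legitimate and both sides are holomorphic for $\operatorname{Re}\mu>-1$ after pairing with a test function, so the logic closes.

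The trade-off is one of technical burden. The paper's direct range is precisely the one where everything is an honest integral, so the table lookup carries no distributional subtleties. In your approach the Poisson-based computation already lives in the distributional world even for $\mu>-1/2$ (the $s$-integral $\int_0^\infty s^{2\mu+1}\cos(s\alpha)\,ds$ diverges for $\mu\ge 0$), so the interchange of integrals and the evaluation of the inner cosine transform of $s_+^{2\mu+1}$ must be done by regularization or duality as you note; this makes the ``easy'' half of your argument about as hard as the paper's full proof. Your preliminary reduction $\partial_s(E_\nu-\check E_\nu)(s,z)=2^{-2\nu}\pi^{(1-d)/2}|s|\,\chi_+^{\nu-(d+1)/2}(s^2-|z|^2)$ is correct (the $\delta(s)$ term from differentiating $\operatorname{sign}(s)$ does vanish since $\chi_+^{\alpha}(-|z|^2)=0$ for $z\ne 0$), and the $\mu=-1/2$ consistency check is a nice sanity test. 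So: a valid alternative, but the paper's choice of ``nice'' range and use of a known cosine-transform table makes for a cleaner argument with less distributional bookkeeping.
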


\begin{proof}
Since by Proposition \ref{Lemma Hormander} (i) and (iii)%
\[
E_{\nu}\left(  t,z\right)  =2^{-2\nu-1}\pi^{\left(  1-d\right)  /2}\chi
_{+}^{\nu+\left(  1-d\right)  /2}\left(  t^{2}-\left\vert z\right\vert
^{2}\right)  \quad t>0,
\]
if $\left(  d-1\right)  /2<\nu<d/2$, then
\[
\partial_{t}\left(  E_{\nu}-\check{E}_{\nu}\right)  \left(  t,z\right)
\]
is an even, locally integrable function in $t$, vanishing at $\infty$, so that
its cosine transform is (see \cite[Formula 11, Table 1.3, Chapter 1, page
12]{bateman}),
\begin{align*}
&  \mathcal{C}^{-1}\left(  \partial_{\cdot}\left(  E_{\nu}-\check{E}_{\nu
}\right)  \left(  \cdot,z\right)  \right)  \left(  s\right) \\
&  =\frac{2}{\pi}\int_{0}^{+\infty}\partial_{t}E_{\nu}\left(  t,z\right)
\cos\left(  st\right)  dt\\
&  =\frac{2^{-2\nu+1}}{\pi^{\left(  d+1\right)  /2}}\frac{1}{\Gamma\left(
\nu+\left(  1-d\right)  /2\right)  }\int_{\left\vert z\right\vert }^{+\infty
}t\left(  t^{2}-\left\vert z\right\vert ^{2}\right)  ^{\nu+\left(  1-d\right)
/2-1}\cos\left(  st\right)  dt\\
&  =\pi^{-d/2}2^{-\nu-d/2}\left\vert s\right\vert ^{-2\nu-1+d}\frac
{J_{-\nu+d/2-1}\left(  s\left\vert z\right\vert \right)  }{\left(  s\left\vert
z\right\vert \right)  ^{-\nu+d/2-1}}.
\end{align*}
Observe now that the distribution $\chi_{+}^{\nu+\left(  1-d\right)  /2}$ in
$\mathcal{D}^{\prime}\left(  \mathbb{R}\right)  $ is entire in the variable
$\nu$, and so is the distribution $\partial_{t}\left(  E_{\nu}-\check{E}_{\nu}\right)  \left(  t,z\right)$ in $\mathcal{D}^{\prime}\left(
\mathbb{R}\right)  $
for fixed $z$. This implies that also the cosine transform%
\[
\mathcal{C}^{-1}\left(  \partial_{\cdot}\left(  E_{\nu}-\check{E}_{\nu
}\right)  \left(  \cdot,z\right)  \right)  \left(  s\right)
\]
can be analytically extended to all complex values of $\nu$ (see \cite[Note 1,
page 171]{gelfand-shilov}). This analytic extension coincides therefore with
the analytic extension of the distribution%
\[
\pi^{-d/2}2^{-\nu-d/2}\left\vert s\right\vert ^{-2\nu-1+d}\frac{J_{-\nu
+d/2-1}\left(  s\left\vert z\right\vert \right)  }{\left(  s\left\vert
z\right\vert \right)  ^{-\nu+d/2-1}}.
\]
Observe that this is the product of the locally integrable function
$\left\vert s\right\vert ^{-2\nu-1+d}$ (recall that $\nu<d/2$) with the smooth
function $\pi^{-d/2}2^{-\nu-d/2}\frac{J_{-\nu+d/2-1}\left(  s\left\vert
z\right\vert \right)  }{\left(  s\left\vert z\right\vert \right)
^{-\nu+d/2-1}}$, which is analytic in $\nu\in\mathbb{C}$.

Thus, the identity
\[
\mathcal{C}^{-1}\left(  \partial_{\cdot}\left(  E_{\nu}-\check{E}_{\nu
}\right)  \left(  \cdot,z\right)  \right)  \left(  s\right)  =\pi
^{-d/2}2^{-\nu-d/2}\left\vert s\right\vert ^{-2\nu-1+d}\frac{J_{-\nu
+d/2-1}\left(  s\left\vert z\right\vert \right)  }{\left(  s\left\vert
z\right\vert \right)  ^{-\nu+d/2-1}}%
\]
holds for all $\nu<d/2$.
\end{proof}

\section{Proof of the main result}

It suffices to show the main inequality (\ref{essential}) for any positive
integer $N$ and for any integer $X$ sufficiently large. Indeed, if $1\leq
X<X_{0}$
\begin{align*}
\sum_{m=0}^{X}\left\vert \sum_{j=1}^{N}a_{j}\varphi_{m}\left(  x\left(
j\right)  \right)  \right\vert ^{2}  &  \geq\left\vert \sum_{j=1}^{N}%
a_{j}\varphi_{0}\left(  x\left(  j\right)  \right)  \right\vert ^{2}=\left(
\sum_{j=1}^{N}a_{j}\right)  ^{2}\\
&  \geq\sum_{j=1}^{N}a_{j}^{2}\geq\frac{1}{X_{0}}X\sum_{j=1}^{N}a_{j}^{2}.
\end{align*}

Let $\kappa$ be a positive integer that we will choose later and let $Y=\kappa
X$. By \cite[Theorem 2]{GL}, we can split the manifold $\mathcal{M}$ into $Y$
disjoint regions $\left\{  \mathcal{R}_{i}\right\}  _{i=1}^{Y}$ with measure
$\left\vert \mathcal{R}_{i}\right\vert =1/Y$ and such that each region
contains a ball of radius $c_{1}Y^{-1/d}$ and is contained in a ball of radius
$c_{2}Y^{-1/d}$, for appropriate values of $c_{1}$ and $c_{2}$ independent of
$Y$. Let us call $\left\{  \mathcal{B}_{r}\right\}  _{r=1}^{R}$ the sequence
of all the regions of the above collection $\left\{  \mathcal{R}_{i}\right\}
_{i=1}^{Y}$ which contain at least one of the points $x\left(  j\right)  $. We
call $K_{r}$ the cardinality of the set $\left\{  j=1,\ldots,N:x\left(
j\right)  \in\mathcal{B}_{r}\right\}  $ and $S_{r}$ the sum of the weights
$\{a_{j}\}$ corresponding to points $x\left(  j\right)  \in\mathcal{B}_{r}$.
Without loss of generality we can assume that
\[
S_{1}\geq S_{2}\geq\ldots\geq S_{R}>0.
\]
We rename the sequence $\left\{  x\left(  j\right)  \right\}  _{j=1}^{N}$ as
\[
\left\{  x_{r,j}\right\}  _{\substack{r=1,\ldots,R\\j=1,\ldots,K_{r}}}
\]
with $x_{r,j}\in\mathcal{B}_{r}$ for all $j=1,\ldots,K_{r}$, and the sequence
$\left\{  a_{j}\right\}  _{j=1}^{N}$ as%
\[
\left\{  a_{r,j}\right\}  _{\substack{r=1,\ldots,R\\j=1,\ldots,K_{r}}}.
\]
Observe that
\[
S_{r}=\sum_{j=1}^{K_{r}}a_{r,j}.
\]
Inequality (\ref{essential}) is an immediate consequence of the following%
\begin{equation}
\sum_{m=0}^{X}\left\vert \sum_{r=1}^{R}\sum_{j=1}^{K_{r}}a_{r,j}\varphi
_{m}\left(  x_{r,j}\right)  \right\vert ^{2}\geq CX\sum_{r=1}^{R}\left(\sum
_{j=1}^{K_{r}}a_{r,j}\right)^{2}.\label{lemma}%
\end{equation}
Let $\psi$ be a smooth radial function on $\mathbb{R}^{d}$ compactly supported
in the ball $B\left(  0,1/2\right)  =\left\{  x\in\mathbb{R}^{d}:\left\vert
x\right\vert \leqslant1/2\right\}  $ such that $\left\Vert \psi\right\Vert
_{2}=1$ and $\int\psi>0$, and set $H\left(  x\right)  =\psi\ast_{d}\psi\left(
x\right)  $. Then clearly $H$ is radial, compactly supported in $B\left(
0,1\right)  $, $H\left(  x\right)  \leq1$ for all $x\in\mathcal{M}$, and $H\left(
0\right)  =1$. Moreover its Fourier transform is $\mathcal{F}_{d}H\left(
\xi\right)  =\left(  \mathcal{F}_{d}\psi\left(  \xi\right)  \right)  ^{2}%
\geq0$ for all $\xi\in\mathbb{R}^{d}$, and has fast decay at infinity with all
its derivatives.

If we now identify $H\left(  x\right)  $ with its radial profile, we can write%
\begin{align}
&  \sum_{m=0}^{X}\left\vert \sum_{r=1}^{R}\sum_{j=1}^{K_{r}}a_{r,j}\varphi
_{m}\left(  x_{r,j}\right)  \right\vert ^{2}\nonumber\\
&  \geq\sum_{m=0}^{+\infty}H\left(  \frac{\lambda_{m}}{\lambda_{X}}\right)
\left\vert \sum_{r=1}^{R}\sum_{j=1}^{K_{r}}a_{r,j}\varphi_{m}\left(
x_{r,j}\right)  \right\vert ^{2}\label{somma}\\
&  =\sum_{r=1}^{R}\sum_{j=1}^{K_{r}}\sum_{s=1}^{R}\sum_{i=1}^{K_{s}}%
a_{r,j}a_{s,i}\left(  \sum_{m=0}^{+\infty}H\left(  \frac{\lambda_{m}}%
{\lambda_{X}}\right)  \varphi_{m}\left(  x_{r,j}\right)  \overline{\varphi
_{m}\left(  x_{s,i}\right)  }\right)  .\nonumber
\end{align}
Let us define the kernel%
\begin{equation}
F_{X}\left(  x,y\right)  :=\sum_{m=0}^{+\infty}H\left(  \frac{\lambda_{m}%
}{\lambda_{X}}\right)  \varphi_{m}\left(  x\right)  \overline{\varphi
_{m}\left(  y\right)  }.\label{Fx}%
\end{equation}
We will estimate $F_{X}\left(  x,y\right)  $ using the parametrix for the wave
operator described in the previous section. For this, one would need that the
Fourier cosine transform of $H\left(  \frac{\cdot}{\lambda_{X}}\right)  $ have
small support. This of course cannot be achieved, having $H$ itself compact
support. For this reason we pick $\eta=\mathcal{F}_{d}\phi$ where $\phi\left(
\xi\right)  $ is a nonnegative smooth radial function supported in $B\left(
0,\varepsilon/2\pi\right)  $ and such that $\phi\left(  \xi\right)  =1$ in
$B\left(  0,\varepsilon/4\pi\right)  $ and define%
\[
\widetilde{H}\left(  x\right)  =H\left(  \frac{\cdot}{\lambda_{X}}\right)
\ast_{d}\eta\left(  x\right)  .
\]
The reason for taking a $d$-dimensional convolution will be clarified in Lemma
\ref{Lemma F1} where we use the fact that $\mathcal{F}_{d}\widetilde{H}\geq0$.

Observe that $\operatorname*{supp}\mathcal{F}_{d}\widetilde{H}\subset B\left(
0,\varepsilon/2\pi\right)  $. It is remarkable that the cosine transform of
$\widetilde{H}$ has support in $\left[  0,\varepsilon\right]  $ and is nonnegative.

\begin{lemma}
\label{support} $\mathcal{C}^{-1}\widetilde{H}\left(  \rho\right)  \geq0$ for
$\rho\geq0$ and $\mathcal{C}^{-1}\widetilde{H}\left(  \rho\right)  =0$ for
$\rho>\varepsilon$.
\end{lemma}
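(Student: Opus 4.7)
The plan is to reduce the one-variable cosine transform of $\widetilde{H}$ to the $d$-dimensional Fourier transform $\mathcal{F}_d\widetilde{H}$, which is manifestly nonnegative and compactly supported by the very construction of $\widetilde{H}$.

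The first step is to factor $\mathcal{F}_d\widetilde{H}$. Since $\phi$ is radial (hence even), Fourier inversion gives $\mathcal{F}_d\eta=\mathcal{F}_d\mathcal{F}_d\phi=\phi$, so by the convolution theorem
\[
\mathcal{F}_d\widetilde{H}(\xi)=\lambda_X^{d}\,(\mathcal{F}_dH)(\lambda_X\xi)\,\phi(\xi).
\]
The factor $\mathcal{F}_dH=(\mathcal{F}_d\psi)^2$ is nonnegative, and $\phi\ge 0$ is supported in $B(0,\varepsilon/2\pi)$, so $\mathcal{F}_d\widetilde{H}\ge 0$ and $\operatorname{supp}\mathcal{F}_d\widetilde{H}\subset B(0,\varepsilon/2\pi)$.

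Next I would invoke the Fourier slice (projection) theorem to move from dimension $d$ down to dimension $1$. Writing $\widetilde{H}_0$ for the even radial profile and using $\widetilde{H}_0(t)=\widetilde{H}(t e_1)$, Fubini gives
\[
\widetilde{H}_0(t)=\int_{\mathbb{R}^d}\mathcal{F}_d\widetilde{H}(\xi)\,e^{2\pi it\xi_1}\,d\xi=\int_{\mathbb{R}}G(\xi_1)\,e^{2\pi it\xi_1}\,d\xi_1,
\]
where $G(\xi_1)=\int_{\mathbb{R}^{d-1}}\mathcal{F}_d\widetilde{H}(\xi_1,\xi')\,d\xi'$. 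Hence $\mathcal{F}_1\widetilde{H}_0=G$, and $G$ inherits from $\mathcal{F}_d\widetilde{H}$ the two properties: $G\ge 0$ and $\operatorname{supp}G\subset[-\varepsilon/2\pi,\varepsilon/2\pi]$.

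The last step is to translate from $\mathcal{F}_1$ to $\mathcal{C}^{-1}$ using the identities recorded in the paper. Since $\widetilde{H}_0$ is even, the formula $\mathcal{F}_1f(t)=2\mathcal{C}f(2\pi t)$ combined with $\mathcal{C}^{-1}=(2/\pi)\mathcal{C}$ on even functions yields
\[
\mathcal{C}^{-1}\widetilde{H}_0(\rho)=\frac{1}{\pi}\,G\!\left(\frac{\rho}{2\pi}\right).
\]
Both conclusions then follow immediately: the right-hand side is nonnegative for all $\rho\ge 0$, and vanishes whenever $\rho/(2\pi)>\varepsilon/(2\pi)$, i.e.\ whenever $\rho>\varepsilon$. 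I do not expect any genuine obstacle here; the only thing requiring care is bookkeeping of the normalization constants between $\mathcal{F}_d$, $\mathcal{F}_1$, $\mathcal{C}$ and $\mathcal{C}^{-1}$, and verifying that one may legitimately apply the slice-projection identity to the radial profile of $\widetilde{H}$ (which is clear because $\mathcal{F}_d\widetilde{H}\in L^1\cap L^\infty$ and $\widetilde{H}$ is a Schwartz function, thanks to the rapid decay of $H$ and the compact support of $\phi$).
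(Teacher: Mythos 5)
Your argument is correct and is essentially the paper's proof: in both, the key observation is that $\mathcal{F}_d\widetilde H = \lambda_X^d\,(\mathcal{F}_d\psi)^2(\lambda_X\cdot)\,\phi$ is nonnegative and supported in $B(0,\varepsilon/2\pi)$, and then $\mathcal{C}^{-1}\widetilde H$ is expressed as a positive, support-nonexpanding transform of $\mathcal{F}_d\widetilde H$. The only difference is presentational: you derive the needed one-dimensional reduction directly via Fubini (the projection-slice theorem), whereas the paper cites the Stempak--Trebels transplantation identity (\ref{Transplantation}) with $d'=1$, which after passing to polar coordinates in the $\xi'$-variables is exactly the same formula.
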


\begin{proof}
It is known (see \cite[eq. (3.9)]{st} ) that for $d>d^{\prime}\geq1,$%
\begin{equation}
\mathcal{F}_{d^{\prime}}\left(  \mathcal{F}_{d}g\right)  \left(  s\right)
=c_{d,d^{\prime}}\int_{s}^{+\infty}\left(  r^{2}-s^{2}\right)  ^{\left(
d-d^{\prime}\right)  /2-1}rg\left(  r\right)  dr. \label{Transplantation}%
\end{equation}
Let now $g\left(  r\right)  =\mathcal{F}_{d}\widetilde{H}\left(  r\right)  $.
Since $\widetilde{H}\left(  s\right)  =\mathcal{F}_{d}g\left(  s\right)  $ and
the cosine transform is essentially $\mathcal{F}_{1}$ we obtain%
\[
\mathcal{C}^{-1}\widetilde{H}\left(  \rho\right)  =\frac{1}{\pi}%
\mathcal{F}_{1}\mathcal{F}_{d}g\left(  \frac{\rho}{2\pi}\right)
\]
and the thesis follows immediately from (\ref{Transplantation}), the fact that
$g\left(  r\right)  \geq0$ and the fact that $g\left(  r\right)  =0$ for
$r>\varepsilon/2\pi$.
\end{proof}

Let us go back to the kernel $F_{X}$,
\begin{align*}
F_{X}\left(  x,y\right)   &  =\sum_{m=0}^{+\infty}H\left(  \frac{\lambda_{m}%
}{{\lambda_{X}}}\right)  \varphi_{m}\left(  x\right)  \overline{\varphi
_{m}\left(  y\right)  }\\
&  =\sum_{m=0}^{+\infty}\widetilde{H}\left(  \lambda_{m}\right)  \varphi
_{m}\left(  x\right)  \overline{\varphi_{m}\left(  y\right)  }+\sum
_{m=0}^{+\infty}\left(  H\left(  \frac{\lambda_{m}}{{\lambda_{X}}}\right)
-\widetilde{H}\left(  \lambda_{m}\right)  \right)  \varphi_{m}\left(
x\right)  \overline{\varphi_{m}\left(  y\right)  }%
\end{align*}
Since%
\[
\widetilde{H}\left(  \lambda\right)  =\int_{0}^{+\infty}\mathcal{C}%
^{-1}\widetilde{H}\left(  t\right)  \cos\left(  \lambda t\right)  dt,
\]
and $\mathcal{C}^{-1}\widetilde{H}\left(  t\right)  $ is supported in $\left[
-\varepsilon,\varepsilon\right]  $, by Theorem \ref{sogge}, we can write
\begin{align*}
&  \sum_{m=0}^{+\infty}\widetilde{H}\left(  \lambda_{m}\right)  \varphi
_{m}\left(  x\right)  \overline{\varphi_{m}\left(  y\right)  }\\
&  =\sum_{m=0}^{+\infty}\int_{0}^{+\infty}\mathcal{C}^{-1}\widetilde{H}\left(
t\right)  \cos\left(  \lambda_{m}t\right)  dt\varphi_{m}\left(  x\right)
\overline{\varphi_{m}\left(  y\right)  }\\
&  =\frac{1}{2}\sum_{m=0}^{+\infty}\left\langle \cos\left(  \lambda_{m}%
\cdot\right)  \varphi_{m}\left(  x\right)  \overline{\varphi_{m}\left(
y\right)  },\mathcal{C}^{-1}\widetilde{H}\right\rangle \\
&  =\frac{1}{2}\left\langle \cos\left(  \cdot\sqrt{\Delta}\right)  \left(
x,y\right)  ,\mathcal{C}^{-1}\widetilde{H}\right\rangle \\
&  =\frac{1}{2}\sum_{\nu=0}^{Q}\alpha_{\nu}\left(  x,y\right)  \Omega_{\nu
}\left(  x,y\right)  +\int_{0}^{\varepsilon}R_{Q}\left(  t,x,y\right)
\mathcal{C}^{-1}\widetilde{H}\left(  t\right)  dt
\end{align*}
where we set%
\[
\Omega_{\nu}\left(  x,y\right)  =\left\langle \partial_{t}\left(  E_{\nu
}-\check{E}_{\nu}\right)  \left(  \cdot,d\left(  x,y\right)  \right)
,\mathcal{C}^{-1}\widetilde{H}\right\rangle .
\]
We can therefore decompose the kernel $F_{X}$ as follows%
\[
F_{X}\left(  x,y\right)  =\sum_{n=1}^{5}F_{n}\left(  x,y\right)  ,
\]
where%
\begin{align*}
F_{1}\left(  x,y\right)   &  =\frac{1}{2}\alpha_{0}\left(  x,y\right)
\Omega_{0}\left(  x,y\right) , \\
F_{2}\left(  x,y\right)   &  =\frac{1}{2}\sum_{1\leq\nu<d/2}\alpha_{\nu
}\left(  x,y\right)  \Omega_{\nu}\left(  x,y\right),  \\
F_{3}\left(  x,y\right)   &  =\frac{1}{2}\sum_{d/2\leq\nu\leq Q}\alpha_{\nu
}\left(  x,y\right)  \Omega_{\nu}\left(  x,y\right),  \\
F_{4}\left(  x,y\right)   &  =\int_{0}^{\varepsilon}R_{Q}\left(  t,x,y\right)
\mathcal{C}^{-1}\widetilde{H}\left(  t\right)  dt,\\
F_{5}\left(  x,y\right)   &  =\sum_{m=0}^{+\infty}\left(  H\left(
\frac{\lambda_{m}}{{\lambda_{X}}}\right)  -\widetilde{H}\left(  \lambda
_{m}\right)  \right)  \varphi_{m}\left(  x\right)  \overline{\varphi
_{m}\left(  y\right)  .}%
\end{align*}
Recalling (\ref{somma}) and (\ref{Fx}) we have%
\begin{align*}
  \sum_{m=0}^{X}\left\vert \sum_{r=1}^{R}\sum_{j=1}^{K_{r}}a_{r,j}\varphi
_{m}\left(  x_{r,j}\right)  \right\vert ^{2}&=\sum_{r=1}^{R}\sum_{j=1}^{K_{r}%
}\sum_{s=1}^{R}\sum_{i=1}^{K_{s}}a_{r,j}a_{s,i}F_{X}\left(  x_{r,j}%
,x_{s,i}\right)  \\
&  =\sum_{n=1}^{5}\sum_{r=1}^{R}\sum_{j=1}^{K_{r}}\sum_{s=1}^{R}\sum
_{i=1}^{K_{s}}a_{r,j}a_{s,i}F_{n}\left(  x_{r,j},x_{s,i}\right)  .
\end{align*}
We start estimating the term with $F_{1}$ which is the positive part of the
kernel and gives the main contribution.

\begin{lemma}
\label{Lemma F1}For $\kappa$ large enough there exist $X_{0}>0$ and $C>0$ such
that for every $X>X_{0}$%
\[
\sum_{r=1}^{R}\sum_{j=1}^{K_{r}}\sum_{s=1}^{R}\sum_{i=1}^{K_{s}}a_{r,j}%
a_{s,i}F_{1}\left(  x_{r,j},x_{s,i}\right)  \geq CX\sum_{r=1}^{R}\left(
\sum_{j=1}^{K_{r}}a_{r,j}\right)  ^{2}.
\]

\end{lemma}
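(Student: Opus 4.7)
The plan is to exploit the pointwise nonnegativity of $F_1$. Once this is in hand, since every $a_{r,j}$ is positive, every term in the double sum is nonnegative and all off-diagonal ($r\neq s$) contributions can be discarded, leaving
\[
\sum_{r,s,j,i}a_{r,j}a_{s,i}F_1(x_{r,j},x_{s,i})\;\geq\;\sum_{r=1}^{R}\sum_{j,i=1}^{K_r}a_{r,j}a_{r,i}F_1(x_{r,j},x_{r,i}).
\]
The problem then reduces to producing a uniform lower bound of the form $F_1(x,y)\geq CX$ for $x,y$ in the same region $\mathcal{B}_r$.

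To establish $F_1\geq 0$, I would apply Lemma \ref{Trasf E_nu} at $\nu=0$ together with the Parseval identity for the cosine transform (using that $\mathcal{C}\mathcal{C}^{-1}\widetilde{H}=\widetilde{H}$) to rewrite
\[
\Omega_0(x,y)=c_d\int_0^{+\infty}\widetilde{H}(\lambda)\,\lambda^{d-1}\,\frac{J_{d/2-1}(\lambda d(x,y))}{(\lambda d(x,y))^{d/2-1}}\,d\lambda
\]
for a positive constant $c_d$. By the Hankel representation (\ref{Hankel}), this integral is a positive multiple of the radial $d$-dimensional Fourier transform $\mathcal{F}_d\widetilde{H}$ evaluated at a vector of norm proportional to $d(x,y)$. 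By construction
\[
\mathcal{F}_d\widetilde{H}(\xi)=\lambda_X^d\,(\mathcal{F}_d\psi(\lambda_X\xi))^2\,\phi(\xi)
\]
is a product of nonnegative factors, so $\mathcal{F}_d\widetilde{H}\geq 0$ and hence $\Omega_0\geq 0$. Combined with $\alpha_0>0$ on $\mathcal{M}\times\mathcal{M}$ from Theorem \ref{sogge}, this yields $F_1\geq 0$ pointwise.

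For the diagonal lower bound, setting $y=x$ gives $\Omega_0(x,x)$ equal, up to a positive constant, to $\mathcal{F}_d\widetilde{H}(0)=\lambda_X^d(\mathcal{F}_d\psi(0))^2\phi(0)$, a positive constant multiple of $\lambda_X^d$ independent of $x$. By Weyl's law $\lambda_X^d\geq C_0 X$ for $X$ large, so $\Omega_0(x,x)\geq C_1 X$ uniformly. Since $\mathcal{F}_d\widetilde{H}$ is continuous at the origin with $\mathcal{F}_d\psi(0)>0$ and $\phi\equiv 1$ near $0$, there is $\delta>0$, independent of $X$, such that $\mathcal{F}_d\widetilde{H}(\xi)\geq\tfrac{1}{2}\mathcal{F}_d\widetilde{H}(0)$ whenever $\lambda_X|\xi|\leq\delta$. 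Consequently $F_1(x,y)\geq C_2 X$ whenever $\lambda_X d(x,y)\leq\delta$. The regions $\mathcal{B}_r$ have diameter at most $2c_2(\kappa X)^{-1/d}$ and $\lambda_X\sim X^{1/d}$, so $\lambda_X d(x_{r,j},x_{r,i})\leq C\kappa^{-1/d}$, which is below $\delta$ once $\kappa$ is chosen sufficiently large. Summing the resulting bound $F_1(x_{r,j},x_{r,i})\geq C_2 X$ over $j,i,r$ yields the desired inequality. The main step is the identification of $\Omega_0$ with a positive multiple of $\mathcal{F}_d\widetilde{H}$ via the Hankel transform, as this is precisely what transfers the positivity built into the construction of $\widetilde{H}$ to the leading term of the parametrix.
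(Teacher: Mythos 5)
Your proof is correct and follows essentially the same route as the paper: identify $\Omega_0$ with a positive multiple of $\mathcal{F}_d\widetilde{H}$ via Lemma \ref{Trasf E_nu} and the Hankel formula (\ref{Hankel}), observe $\mathcal{F}_d\widetilde{H}=\lambda_X^d(\mathcal{F}_d\psi(\lambda_X\cdot))^2\phi(\cdot)\geq 0$ so that, with $\alpha_0>0$, off-diagonal terms can be discarded, and then pick $\kappa$ large so that $\lambda_X d(x,y)$ stays small on each region $\mathcal{B}_r$, giving $F_1\gtrsim\lambda_X^d\sim X$ there. The only detail you elide, as does the paper, is that compactness of $\mathcal{M}\times\mathcal{M}$ is needed to upgrade $\alpha_0>0$ to a uniform lower bound $\alpha_0\geq c>0$.
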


\begin{proof}
First of all we show that $\Omega_{0}\left(  x,y\right)  $ is positive.
Indeed, by Lemma \ref{Trasf E_nu} and (\ref{Hankel}), for every $x,y\in
\mathcal{M},$
\begin{align*}
\Omega_{0}\left(  x,y\right)   &  =\left\langle \mathcal{C}^{-1}\left(
\partial_{t}\left(  E_{0}-\check{E}_{0}\right)  \left(  \cdot,d\left(
x,y\right)  \right)  \right)  ,\widetilde{H}\right\rangle \\
&  =2\int_{0}^{+\infty}\mathcal{C}^{-1}\left(  \partial_{t}\left(
E_{0}-\check{E}_{0}\right)  \left(  \cdot,d\left(  x,y\right)  \right)
\right)  \left(  t\right)  \widetilde{H}\left(  t\right)  dt\\
&  =\frac{2}{d\left(  x,y\right)  ^{d/2-1}\left(  2\pi\right)  ^{d/2}}\int
_{0}^{+\infty}J_{d/2-1}\left(  2\pi\frac{d\left(  x,y\right)  }{2\pi}t\right)
\widetilde{H}\left(  t\right)  t^{d/2}dt\\
&  =\frac{2}{\left(  2\pi\right)  ^{d}}\mathcal{F}_{d}\widetilde{H}\left(
\frac{d\left(  x,y\right)  }{2\pi}\right)  \\
&  =\frac{2}{\left(  2\pi\right)  ^{d}}\lambda_{X}^{d}\mathcal{F}_{d}H\left(
\frac{\lambda_{X}d\left(  x,y\right)  }{2\pi}\right)  \mathcal{F}_{d}%
\eta\left(  \frac{d\left(  x,y\right)  }{2\pi}\right)  \geq0.
\end{align*}
Since also $\alpha_{0}\left(  x,y\right)  $ is positive, we can disregard
off-diagonal terms,%
\begin{align*}
&  \sum_{r=1}^{R}\sum_{j=1}^{K_{r}}\sum_{s=1}^{R}\sum_{i=1}^{K_{s}}%
a_{r,j}a_{s,i}F_{1}\left(  x_{r,j},x_{s,i}\right)\\ & =\frac{1}{2}\sum_{r=1}%
^{R}\sum_{j=1}^{K_{r}}\sum_{s=1}^{R}\sum_{i=1}^{K_{s}}a_{r,j}a_{s,i}\alpha
_{0}\left(  x_{r,j},x_{s,i}\right)  \Omega_{0}\left(  x_{r,j},x_{s,i}\right)
\\
&  \geqslant\frac{1}{2}\sum_{r=1}^{R}\sum_{j=1}^{K_{r}}\sum_{i=1}^{K_{r}%
}a_{r,j}a_{r,i}\alpha_{0}\left(  x_{r,j},x_{r,i}\right)  \Omega_{0}\left(
x_{r,j},x_{r,i}\right)  \\
&  =\frac{\lambda_{X}^{d}}{\left(  2\pi\right)  ^{d}}\sum_{r=1}^{R}\sum
_{j=1}^{K_{r}}\sum_{i=1}^{K_{r}}a_{r,j}a_{r,i}\alpha_{0}\left(  x_{r,j}%
,x_{r,i}\right)  \mathcal{F}_{d}H\left(  \frac{\lambda_{X}d\left(
x_{r,j},x_{r,i}\right)  }{2\pi}\right)  \mathcal{F}_{d}\eta\left(
\frac{d\left(  x_{r,j},x_{r,i}\right)  }{2\pi}\right)  .
\end{align*}
By Weyl's estimate (see e.g. \cite[III, Corollary 17.5.8]{Hormander})
$\lambda_{X}\sim X^{1/d}$. Thus, if $x,y\in\mathcal{B}_{r}$ then%
\[
\lambda_{X}d(x,y)\leq\lambda_{X}2c_{2}\left(  \kappa X\right)  ^{-1/d}\leq
c_{3}\kappa^{-1/d}.
\]
Let $\kappa$ large enough so that if $x,y\in\mathcal{B}_{r}$%
\[
\mathcal{F}_{d}H\left(  \frac{\lambda_{X}d\left(  x,y\right)  }{2\pi}\right)
=\left(  \mathcal{F}_{d}\psi\right)  ^{2}\left(  \frac{\lambda_{X}d(x,y)}%
{2\pi}\right)  \geq\frac{\left(  \mathcal{F}_{d}\psi\right)  ^{2}\left(
0\right)  }{2}>0
\]
and%
\[
\frac{d\left(  x,y\right)  }{2\pi}\leq\frac{\varepsilon}{4\pi},%
\]
so that
\[
\mathcal{F}_{d}\eta\left(  \frac{d\left(  x,y\right)  }{2\pi}\right)  =1.
\]
Finally,%
\begin{align*}
&  \sum_{r=1}^{R}\sum_{j=1}^{K_{r}}\sum_{s=1}^{R}\sum_{i=1}^{K_{s}}%
a_{r,j}a_{s,i}F_{1}\left(  x_{r,j},x_{s,i}\right)  \\
&  \geq CX\sum_{r=1}^{R}\sum_{j=1}^{K_{r}}\sum_{i=1}^{K_{r}}a_{r,j}%
a_{r,i}=CX\sum_{r=1}^{R}\left(  \sum_{j=1}^{K_{r}}a_{r,j}\right)  ^{2}.
\end{align*}

\end{proof}

The following lemmas show that the contributions given by the terms with $F_2$, $F_3$, $F_4$, $F_5$
are negligible.

\begin{lemma}
There exist $C>0$ and $X_{0}>0$ such that for every $X>X_{0}$%
\[
\left\vert \sum_{r=1}^{R}\sum_{j=1}^{K_{r}}\sum_{s=1}^{R}\sum_{i=1}^{K_{s}%
}a_{r,j}a_{s,i}F_{2}\left(  x_{r,j},x_{s,i}\right)  \right\vert \leq
CX^{1-2/d}\sum_{r=1}^{R}\left(  \sum_{j=1}^{K_{r}}a_{r,j}\right)  ^{2}.
\]

\end{lemma}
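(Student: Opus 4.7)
The plan is to establish a pointwise decay estimate of the form
\[
|F_2(x,y)| \le C_M\, \lambda_X^{d-2}\bigl(1 + \lambda_X d(x,y)\bigr)^{-M}
\]
for every $M > 0$, and then to derive the stated quadratic-form inequality by a counting argument on the regions $\{\mathcal{B}_r\}$. Since $\lambda_X \sim X^{1/d}$ by Weyl's law, the prefactor $\lambda_X^{d-2}$ is of the correct order $X^{1-2/d}$.

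The first step is to compute $\Omega_\nu(x,y)$ explicitly for $1 \le \nu < d/2$. By Lemma \ref{Trasf E_nu}, $\mathcal{C}^{-1}(\partial_t(E_\nu-\check E_\nu)(\cdot,z))$ is a locally integrable function, so one can transfer the cosine transform from $\widetilde H$ onto the distribution via the self-adjointness $\langle u,\mathcal{C}^{-1}\widetilde H\rangle=\langle\mathcal{C}^{-1}u,\widetilde H\rangle$. Recognizing the resulting Bessel integral as a Hankel transform of dimension $d-2\nu$ via (\ref{Hankel}) then yields
\[
\Omega_\nu(x,y) = c_{d,\nu}\, \mathcal{F}_{d-2\nu}\widetilde H\!\left(\frac{d(x,y)}{2\pi}\right).
\]

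The second step is to extract the decay of $\mathcal{F}_{d-2\nu}\widetilde H$. Using the transplantation identity (\ref{Transplantation}) together with $\mathcal{F}_d^2 = I$ on radial functions (valid since $\widetilde H$ is radial), one obtains
\[
\mathcal{F}_{d-2\nu}\widetilde H(s) = c \int_s^{\varepsilon/(2\pi)} (r^2-s^2)^{\nu-1}\, r\, \mathcal{F}_d\widetilde H(r)\, dr.
\]
Substituting $\mathcal{F}_d\widetilde H(r) = \lambda_X^d(\mathcal{F}_d\psi)^2(\lambda_X r)\phi(r)$ and rescaling $u = \lambda_X r$ gives $\mathcal{F}_{d-2\nu}\widetilde H(s) = c\,\lambda_X^{d-2\nu}\, G_\nu(\lambda_X s)$, where $G_\nu$ depends only on $\psi$, $\phi$, $d$, $\nu$, and, thanks to the Schwartz decay of $\mathcal{F}_d\psi$, satisfies $|G_\nu(v)| \le C_M(1+v)^{-M}$ for every $M$. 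Combined with the uniform boundedness of $\alpha_\nu$ on the compact product $\mathcal{M}\times\mathcal{M}$ and the observation that $\nu=1$ gives the slowest-decaying factor $\lambda_X^{d-2}$, this produces the pointwise bound on $|F_2|$.

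For the final step, set $K_{rs} := \sup_{x\in\mathcal{B}_r,\,y\in\mathcal{B}_s}(1+\lambda_X d(x,y))^{-M}$. Positivity of the weights together with $S_r S_s \le \tfrac{1}{2}(S_r^2+S_s^2)$ and symmetry $K_{rs}=K_{sr}$ give
\[
\left|\sum_{r,s,j,i} a_{r,j} a_{s,i}\, F_2(x_{r,j},x_{s,i})\right| \le C\,\lambda_X^{d-2} \sum_r S_r^2 \sum_s K_{rs}.
\]
Since each $\mathcal{B}_r$ has diameter $\lesssim \lambda_X^{-1}$ and volume $\sim \kappa^{-1}\lambda_X^{-d}$, the number of regions within distance $k\lambda_X^{-1}$ of a fixed $\mathcal{B}_r$ is $\lesssim \kappa k^d$, so a dyadic shell decomposition gives $\sum_s K_{rs} \le C_\kappa$ uniformly in $r$ whenever $M > d$. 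The main obstacle is the second step: one must verify that the singularity $(r^2-s^2)^{\nu-1}$ at $r=s$, integrable precisely because $\nu \ge 1$, does not destroy the rapid decay inherited from $\mathcal{F}_d\psi$ after rescaling. This is exactly what isolates the present estimate from the positive main term treated in Lemma \ref{Lemma F1}.
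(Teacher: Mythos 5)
Your proof is correct and follows essentially the same path as the paper's: Lemma~\ref{Trasf E_nu} converts $\Omega_\nu$ into $\mathcal{F}_{d-2\nu}\widetilde H(d(x,y)/2\pi)$, the transplantation formula~(\ref{Transplantation}) together with the rapid decay of $\mathcal{F}_d\psi$ gives the pointwise bound $\lesssim\lambda_X^{d-2\nu}(1+\lambda_X d(x,y))^{-M}$, and a dyadic count over the regions $\mathcal{B}_r$ closes the quadratic form. The only cosmetic difference is that you finish with $S_rS_s\le\tfrac12(S_r^2+S_s^2)$ and a uniform bound on $\sum_s K_{rs}$, whereas the paper uses the ordering $S_1\ge\cdots\ge S_R$ to write $S_rS_s\le S_r^2$ for $s\ge r$; both are equivalent.
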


\begin{proof}
We will show that for every integer $\nu$, $1\leq\nu<d/2,$
\[
\left\vert \sum_{r=1}^{R}\sum_{j=1}^{K_{r}}\sum_{s=1}^{R}\sum_{i=1}^{K_{s}%
}a_{r,j}a_{s,i}\alpha_{\nu}\left(  x_{r,j},x_{s,i}\right)  \Omega_{\nu}\left(
x_{r,j},x_{s,i}\right)  \right\vert \leq CX^{1-2\nu/d}\sum_{r=1}^{R}\left(
\sum_{j=1}^{K_{r}}a_{r,j}\right)  ^{2}.
\]
By Lemma \ref{Trasf E_nu}, for every $x,y\in\mathcal{M}$,
\begin{align*}
\Omega_{\nu}\left(  x,y\right)   &  =\left\langle \mathcal{C}^{-1}\left(
\partial_{\cdot}\left(  E_{\nu}-\check{E}_{\nu}\right)  \left(  \cdot,d\left(
x,y\right)  \right)  \right)  ,\widetilde{H}\right\rangle \\
&  =2\int_{0}^{+\infty}\pi^{-d/2}2^{-\nu-d/2}\left\vert t\right\vert
^{-2\nu-1+d}\frac{J_{-\nu+d/2-1}\left(  d\left(  x,y\right)  t\right)
}{\left(  d\left(  x,y\right)  t\right)  ^{-\nu+d/2-1}}\widetilde{H}\left(
t\right)  dt\\
&  =\pi^{\nu-d}2^{1-d}\mathcal{F}_{d-2\nu}\widetilde{H}\left(  \frac{d\left(
x,y\right)  }{2\pi}\right)  \\
&  =\pi^{\nu-d}2^{1-d}\mathcal{F}_{d-2\nu}\left(  H\left(  \frac{\cdot
}{\lambda_{X}}\right)  \ast_{d}\eta\right)  \left(  \frac{d\left(  x,y\right)
}{2\pi}\right)  .
\end{align*}
Using (\ref{Transplantation}) and the fast decay at infinity of $\mathcal{F}%
_{d}\psi$, for any positive $M$ there exist positive constants $C$ and $G$
such that for every $\rho\geq0$
\begin{align*}
&  \left\vert \mathcal{F}_{d-2\nu}\left(  H\left(  \frac{\cdot}{\lambda_{X}%
}\right)  \ast_{d}\eta\right)  \left(  \rho\right)  \right\vert =\lambda
_{X}^{d}\left\vert \mathcal{F}_{d-2\nu}\left(  \mathcal{F}_{d}\left(  \left(
\mathcal{F}_{d}\psi\right)  ^{2}\left(  \lambda_{X}\cdot\right)
\mathcal{F}_{d}\eta\right)  \right)  \left(  \rho\right)  \right\vert \\
&  =c_{d,d-2\nu}\lambda_{X}^{d}\int_{\rho}^{+\infty}\left(  r^{2}-\rho
^{2}\right)  ^{\nu-1}r\left(  \mathcal{F}_{d}\psi\left(  \lambda_{X}r\right)
\right)  ^{2}\mathcal{F}_{d}\eta\left(  r\right)  dr\\
&  \leq C\lambda_{X}^{d}\int_{\rho}^{+\infty}\left(  r^{2}-\rho^{2}\right)
^{\nu-1}r\left(  \mathcal{F}_{d}\psi\left(  \lambda_{X}r\right)  \right)
^{2}dr\\
&  \leq C\lambda_{X}^{d}\int_{\rho}^{+\infty}\left(  r^{2}-\rho^{2}\right)
^{\nu-1}r\frac{C}{\left(  1+\lambda_{X}r\right)  ^{G}}dr\\
&  \leq C\lambda_{X}^{d-2\nu}\int_{\lambda_{X}\rho}^{+\infty}\left(
u^{2}-\left(  \lambda_{X}\rho\right)  ^{2}\right)  ^{\nu-1}u\frac{C}{\left(
1+u\right)  ^{G}}du\leq\frac{C\lambda_{X}^{d-2\nu}}{\left(  1+\left\vert
\lambda_{X}\rho\right\vert \right)  ^{M}}.
\end{align*}
Therefore, using the symmetry of $\Omega_{\nu}\left(  x,y\right)  $, for any
integer $\nu$ with $1\leq\nu<d/2$ we obtain%
\begin{align*}
&  \left\vert \sum_{r=1}^{R}\sum_{j=1}^{K_{r}}\sum_{s=1}^{R}\sum_{i=1}^{K_{s}%
}a_{r,j}a_{s,i}\alpha_{\nu}\left(  x_{r,j},x_{s,i}\right)  \Omega_{\nu}\left(
x_{r,j},x_{s,i}\right)  \right\vert \\
\leq &  2C\sum_{r=1}^{R}\sum_{j=1}^{K_{r}}\sum_{s=r}^{R}\sum_{i=1}^{K_{s}%
}a_{r,j}a_{s,i}\left\vert \Omega_{\nu}\left(  x_{r,j},x_{s,i}\right)
\right\vert \\
\leq &  C\sum_{r=1}^{R}\sum_{j=1}^{K_{r}}\sum_{s=r}^{R}\sum_{i=1}^{K_{s}%
}a_{r,j}a_{s,i}\lambda_{X}^{d-2\nu}\frac{1}{\left(  1+\lambda_{X}d\left(
x_{r,j},x_{s,i}\right)  \right)  ^{M}}.
\end{align*}
In order to estimate the above sum recall that every region $\mathcal{B}_{r}$
is contained in a ball centered at a point $z_{r}\in\mathcal{B}_{r}$ of radius
$c_{2}Y^{-1/d}$ and let $c_{3}=10c_{2}$. For every fixed $r=1,\ldots,R$ we
will consider separately the contribution of those values of $s$ for which the
$\mathcal{B}_{s}$ is near $\mathcal{B}_{r}$, in the sense that $\mathcal{B}%
_{s}$ is contained in the ball centered at $z_{r}$ and with radius
$c_{3}Y^{-1/d}$, and the contribution of the remaining values of $s$, for
which we will say that $\mathcal{B}_{s}$ is far from $\mathcal{B}_{r}$. Notice
that there are at most
\[
\frac{\left\vert B\left(  z_{r},c_{3}Y^{-1/d}\right)  \right\vert }{Y^{-1}%
}\leq\frac{C\left(  c_{3}Y^{-1/d}\right)  ^{d}}{Y^{-1}}\leq Cc_{3}^{d}%
\]
regions $\mathcal{B}_{s}$ near $\mathcal{B}_{r}$. Thus, using again that
$\lambda_{X}\sim X^{1/d}$ and that for $r\leqslant s$ we have $\sum
_{j=1}^{K_{r}}a_{r,j}\geq\sum_{i=1}^{K_{s}}a_{s,i}$, we obtain
\begin{align*}
&  \sum_{r=1}^{R}\sum_{j=1}^{K_{r}}\sum_{s=r}^{R}\sum_{i=1}^{K_{s}}%
a_{r,j}a_{s,i}\lambda_{X}^{d-2\nu}\frac{1}{\left(  1+\lambda_{X}d\left(
x_{r,j},x_{s,i}\right)  \right)  ^{M}}\\
\leq &  CX^{1-2\nu/d}\sum_{r=1}^{R}\sum_{\substack{s=r\,\\B_{s}%
\,\text{near\thinspace}B_{r}}}^{R}\sum_{j=1}^{K_{r}}a_{r,j}\sum_{i=1}^{K_{s}%
}a_{s,i}\\
&  +CX^{1-2\nu/d}\sum_{r=1}^{R}\sum_{\substack{s=r\,\\B_{s}\text{\thinspace
far from\thinspace}B_{r}}}^{R}\sum_{j=1}^{K_{r}}a_{r,j}\sum_{i=1}^{K_{s}%
}a_{s,i}\left(  \lambda_{X}d\left(  x_{r,j},x_{s,i}\right)  \right)  ^{-M}\\
\leq &  CX^{1-2\nu/d}\sum_{r=1}^{R}\left(  \sum_{j=1}^{K_{r}}a_{r,j}\right)
^{2}\\
&  +CX^{1-2\nu/d}\sum_{r=1}^{R-1}\sum_{\substack{s=r+1\\B_{s}\text{\thinspace
far from\thinspace}B_{r}}}^{R}\sum_{j=1}^{K_{r}}a_{r,j}\sum_{i=1}^{K_{s}%
}a_{s,i}\left(  X^{1/d}d\left(  x_{r,j},x_{s,i}\right)  \right)  ^{-M}.
\end{align*}

Using again that for $r\leq s$ we have $\sum_{j=1}^{K_{r}}a_{r,j}\geq\sum
_{i=1}^{K_{s}}a_{s,i}$,
\begin{align*}
&  \sum_{r=1}^{R-1}\sum_{\substack{s=r+1\\B_{s}\text{\thinspace far
from\thinspace}B_{r}}}^{R}\sum_{j=1}^{K_{r}}a_{r,j}\sum_{i=1}^{K_{s}}%
a_{s,i}\left(  X^{1/d}d\left(  x_{r,j},x_{s,i}\right)  \right)  ^{-M}\\
= &  \sum_{r=1}^{R-1}\sum_{j=1}^{K_{r}}a_{r,j}\sum_{\ell=0}^{\infty}%
\sum_{\substack{s>r:\\2^{\ell-1}c_{3}Y^{-1/d}\leq d\left(  z_{r},z_{s}\right)
\leq2^{\ell}c_{3}Y^{-1/d}}}\sum_{i=1}^{K_{s}}a_{s,i}\left(  X^{1/d}d\left(
x_{r,j},x_{s,i}\right)  \right)  ^{-M}\\
\leq &  C\sum_{r=1}^{R-1}\sum_{j=1}^{K_{r}}a_{r,j}\sum_{\ell=0}^{\infty
}2^{-\ell M}\sum_{\substack{s>r:\\d\left(  z_{r},z_{s}\right)  \leq2^{\ell}c_{3}Y^{-1/d}}}\sum_{i=1}^{K_{s}}a_{s,i}\\
\leq &  C\sum_{r=1}^{R-1}\sum_{j=1}^{K_{r}}a_{r,j}\sum_{\ell=0}^{\infty
}2^{-\ell M}\frac{\left(  2^{\ell}Y^{-1/d}\right)  ^{d}}{Y^{-1}}\sum
_{j=1}^{K_{r}}a_{r,j}\\
\leq &  C\sum_{r=1}^{R-1}\left(  \sum_{j=1}^{K_{r}}a_{r,j}\right)  ^{2}%
\sum_{\ell=0}^{\infty}2^{-\ell\left(  M-d\right)  }\leq C\sum_{r=1}%
^{R-1}\left(  \sum_{j=1}^{K_{r}}a_{r,j}\right)  ^{2}.
\end{align*}

\end{proof}

\begin{lemma}
There exist $C>0$ and $X_{0}$ such that for every $X>X_{0}$%
\[
\left\vert \sum_{r=1}^{R}\sum_{j=1}^{K_{r}}\sum_{s=1}^{R}\sum_{i=1}^{K_{s}%
}a_{r,j}a_{s,i}F_{3}\left(  x_{r,j},x_{s,i}\right)  \right\vert \leq
C\sum_{r=1}^{R}\left(  \sum_{j=1}^{K_{r}}a_{r,j}\right)  ^{2}.
\]

\end{lemma}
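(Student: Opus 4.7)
The plan is to mimic the previous lemma (on $F_2$), whose heart was the pointwise estimate
\[
|\Omega_\nu(x,y)|\le C\lambda_X^{d-2\nu}(1+\lambda_X d(x,y))^{-M}
\]
obtained there for integer $1\le\nu<d/2$. The only substantive task is to extend this estimate to integer $\nu\in[d/2,Q]$. Once in hand, exactly the same near/far summation argument carried out in the previous proof, applied termwise for each of the finitely many $\nu\in\{\lceil d/2\rceil,\dots,Q\}$, yields the claim: for $\nu\ge d/2$ the prefactor $\lambda_X^{d-2\nu}\le 1$, so the sum produces $C\sum_r S_r^2$ instead of the $CX^{1-2\nu/d}\sum_r S_r^2$ that appeared for $F_2$.

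To extend the estimate I would argue by analytic continuation in the complex parameter $\nu$. The previous proof established the identity
\[
\Omega_\nu(x,y)=c_\nu\int_\rho^{+\infty}(r^2-\rho^2)^{\nu-1}r\,\mathcal F_d\widetilde H(r)\,dr,\qquad \rho=d(x,y)/(2\pi),
\]
by combining Lemma~\ref{Trasf E_nu} with the transplantation formula~\eqref{Transplantation}, and bounded the integral using only the rapid decay of $\mathcal F_d\psi$. Since $\mathcal F_d\widetilde H$ is smooth and compactly supported in $B(0,\varepsilon/2\pi)$, the integral on the right is entire in $\nu$ for each fixed $\rho\ge 0$, and the bound on it, which depends only on the structure of the integrand, remains valid for every integer $\nu\ge 1$. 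The pairing $\Omega_\nu(x,y)=\langle\partial_t(E_\nu-\check E_\nu)(\cdot,d(x,y)),\mathcal C^{-1}\widetilde H\rangle$ is likewise entire in $\nu$, because the distribution $\partial_t(E_\nu-\check E_\nu)(\cdot,z)$ is entire in $\nu$ (as already observed in the proof of Lemma~\ref{Trasf E_nu}) and pairing against the fixed test function $\mathcal C^{-1}\widetilde H$ preserves analyticity. Since both sides agree on an open subset of $\nu$-values where the previous proof applies, uniqueness of analytic continuation forces the identity for all complex $\nu$, and in particular at every integer $\nu$ with $d/2\le\nu\le Q$. The estimate for $\Omega_\nu$ then follows, and the $F_2$ summation argument is repeated verbatim.

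The main obstacle is this last continuation step. For $d\ge 4$ the transplantation formula~\eqref{Transplantation} directly provides the identity on the nonempty open interval $\nu\in(0,(d-1)/2)$, so the continuation is immediate. In low dimensions $d\le 3$ this interval is either empty or reduces to a single point, and one must instead derive the identity from the explicit Bessel-integral form of $\mathcal C^{-1}(\partial_t(E_\nu-\check E_\nu)(\cdot,z))$ given by Lemma~\ref{Trasf E_nu} (valid for all real $\nu<d/2$), establishing it on an open set of non-integer real $\nu$ before continuing. Verifying that the constants $c_\nu$, which are combinations of $\Gamma$-functions and powers of $2$ and $\pi$, assemble into an entire function of $\nu$ is routine.
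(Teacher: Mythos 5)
Your plan is correct in substance but follows a genuinely different route from the paper. The paper's proof does not invoke analytic continuation: for integer $\nu\ge d/2$ it simply observes that $\partial_t(E_\nu-\check E_\nu)(t,z)$ is already a locally integrable function, namely $C_\nu|t|(t^2-z^2)_+^{\nu-1+(1-d)/2}$, so
\[
\Omega_\nu(x,y)=\int_{d(x,y)}^\infty t\,(t^2-d(x,y)^2)^{\nu-1+(1-d)/2}\,\mathcal C^{-1}\widetilde H(t)\,dt
\]
is an honest convergent integral. It then inserts the representation of $\mathcal C^{-1}\widetilde H$ coming from the transplantation formula (a single integral in $\mathcal F_d\psi$ and $\mathcal F_d\eta$, which for $d=1$ is just the pointwise product), applies Fubini, evaluates the inner $t$-integral as a beta integral $\propto u^{\nu-1}(u-\rho)^{\nu-1}$, and arrives directly at the bound $\Omega_\nu\le C\lambda_X^{d-2\nu}(1+\lambda_X d(x,y))^{-M}$. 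This completely sidesteps the issue of interpreting $\mathcal F_{d-2\nu}$ when $d-2\nu\le 0$, and it works uniformly in $d\ge 1$ with only a trivial split between $d=1$ (no transplantation needed) and $d\ge2$.

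Your approach, continuing the $F_2$ formula $\Omega_\nu(x,y)=c_\nu\int_{\rho}^{\infty}(r^2-\rho^2)^{\nu-1}r\,\mathcal F_d\widetilde H(r)\,dr$ analytically in $\nu$ from the range where it is honestly a statement about radial Fourier transforms, is a viable alternative and reaches the same intermediate identity, because the paper's beta-integral computation confirms that the two formulas agree. The thing to be careful about is exactly what you flag: for the continuation you need the identity on an open set of $\nu$, and the $F_2$ proof only establishes it at integer $\nu$. For $d\ge 3$ one can first reprove it for non-integer real $\nu\in(0,(d-1)/2)$, where transplantation still applies, and then continue; but for $d=1$ the interval $(0,(d-1)/2)$ is empty and the transplantation formula has no target dimension $d'\ge 1$, so the fix you sketch (``derive the identity from the explicit Bessel-integral form of Lemma~\ref{Trasf E_nu} on an open set of non-integer $\nu$'') is the real work and is not carried out. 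The paper's direct Fubini route is precisely what that computation amounts to, so the effort you would save in high dimensions by invoking continuation reappears in $d\le 2$. Also, minor points: the integral $\int_\rho^\infty(r^2-\rho^2)^{\nu-1}r\,g(r)\,dr$ is holomorphic only for $\operatorname{Re}\nu>0$ as written (which suffices here since $\nu\ge d/2$), not entire as you state; and the prefactor you want in the conclusion is $X^{1-2\nu/d}\le 1$, not $\lambda_X^{d-2\nu}$, though by Weyl's law these are comparable.
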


\begin{proof}
We will show that for every integer $\nu\geq d/2,$%
\begin{align*}
&  \left\vert \sum_{r=1}^{R}\sum_{j=1}^{K_{r}}\sum_{s=1}^{R}\sum_{i=1}^{K_{s}%
}a_{r,j}a_{s,i}\alpha_{\nu}\left(  x_{r,j},x_{s,i}\right)  \Omega_{\nu}\left(
x_{r,j},x_{s,i}\right)  \right\vert \\
\leq &  CX^{1-2\nu/d}\sum_{r=1}^{R}\left(  \sum_{j=1}^{K_{r}}a_{r,j}\right)
^{2}.
\end{align*}
Observe that for $\nu\geq d/2$, the distribution $\partial_{t}\left(  E_{\nu
}-\check{E}_{\nu}\right)  \left(  t,d\left(  x,y\right)  \right)  $ can be
identified with the locally integrable function%
\[
C_{\nu}|t|\left(  t^{2}-d\left(  x,y\right)  ^{2}\right)  _{+}^{\nu-1+\left(
1-d\right)  /2},
\]
for an appropriate value of $C_{\nu}$. Therefore, using the symmetry of
$\Omega_{\nu}\left(  x,y\right)  $,%
\begin{align*}
&  \left\vert \sum_{r=1}^{R}\sum_{j=1}^{K_{r}}\sum_{s=1}^{R}\sum_{i=1}^{K_{s}%
}a_{r,j}a_{s,i}\alpha_{\nu}\left(  x_{r,j},x_{s,i}\right)  \Omega_{\nu}\left(
x_{r,j},x_{s,i}\right)  \right\vert \\
\leq &  C\sum_{r=1}^{R}\sum_{j=1}^{K_{r}}\sum_{s=r}^{R}\sum_{i=1}^{K_{s}%
}a_{r,j}a_{s,i}\int_{d\left(  x_{r,j},x_{s,i}\right)  }^{+\infty}t\left(
t^{2}-d\left(  x_{r,j},x_{s,i}\right)  ^{2}\right)  ^{\nu-1+\left(
1-d\right)  /2}\mathcal{C}^{-1}\widetilde{H}\left(  t\right)  dt,
\end{align*}
where we use the fact that $\mathcal{C}^{-1}\widetilde{H}\left(  t\right)
\geq0$, by Lemma \ref{support}.

\noindent Assume first that $d=1$ and let $D=d\left(  x_{r,j},x_{s,i}\right)
$, then%
\begin{align*}
&  \int_{D}^{+\infty}t\left(  t^{2}-D^{2}\right)  ^{\nu-1+\left(  1-d\right)
/2}\mathcal{C}^{-1}\widetilde{H}\left(  t\right)  dt\\
&  =\int_{D}^{+\infty}t\left(  t^{2}-D^{2}\right)  ^{\nu-1}\frac{1}{\pi
}\mathcal{F}_{1}\widetilde{H}\left(  \frac{t}{2\pi}\right)  dt\\
&  =\lambda_{X}\int_{D}^{+\infty}t\left(  t^{2}-D^{2}\right)  ^{\nu-1}\frac
{1}{\pi}\left(  \mathcal{F}_{1}\psi\left(  \lambda_{X}\frac{t}{2\pi}\right)
\right)  ^{2}\mathcal{F}_{1}\eta\left(  \frac{t}{2\pi}\right)  dt\\
&  \leq c\lambda_{X}\int_{\frac{D}{2\pi}}^{+\infty}u^{\nu}\left(  u-\frac
{D}{2\pi}\right)  ^{\nu-1}\left(  \mathcal{F}_{1}\psi\left(  \lambda
_{X}u\right)  \right)  ^{2}\mathcal{F}_{1}\eta\left(  u\right)  du.
\end{align*}
A similar estimate can be obtained for $d\geq2$. Indeed, by formula
(\ref{Transplantation})%
\begin{align*}
\mathcal{C}^{-1}\widetilde{H}\left(  t\right)   &  =\frac{2}{\pi}%
\mathcal{C}\widetilde{H}\left(  t\right)  \\
&  =\frac{1}{\pi}\mathcal{F}_{1}\mathcal{F}_{d}\mathcal{F}_{d}\widetilde
{H}\left(  \frac{t}{2\pi}\right)  \\
&  =c\int_{\frac{t}{2\pi}}^{+\infty}\left(  u^{2}-\left(  \frac{t}{2\pi
}\right)  ^{2}\right)  ^{\left(  d-1\right)  /2-1}u\mathcal{F}_{d}%
\widetilde{H}\left(  u\right)  du\\
&  =c\lambda_{X}^{d}\int_{\frac{t}{2\pi}}^{+\infty}\left(  u^{2}-\left(
\frac{t}{2\pi}\right)  ^{2}\right)  ^{\left(  d-1\right)  /2-1}u\left(
\mathcal{F}_{d}\psi\left(  \lambda_{X}u\right)  \right)  ^{2}\mathcal{F}%
_{d}\eta\left(  u\right)  du,
\end{align*}
so that, by Fubini's theorem,%
\begin{align*}
&  \int_{D}^{+\infty}t\left(  t^{2}-D^{2}\right)  ^{\nu-1+\left(  1-d\right)
/2}\mathcal{C}^{-1}\widetilde{H}\left(  t\right)  dt\\
= &  c\lambda_{X}^{d}\int_{\frac{D}{2\pi}}^{+\infty}u\left(  \mathcal{F}%
_{d}\psi\left(  \lambda_{X}u\right)  \right)  ^{2}\mathcal{F}_{d}\eta\left(
u\right)  \\
&  \hspace{0.7cm}\times\left(  \int_{D}^{2\pi u}2t\left(  t^{2}-D^{2}\right)
^{\nu-d/2-1/2}\left(  u^{2}-\left(  \frac{t}{2\pi}\right)  ^{2}\right)
^{\left(  d-3\right)  /2}dt\right)  du.
\end{align*}
Since
\begin{align*}
&  \int_{D}^{U}t\left(  t^{2}-D^{2}\right)  ^{\nu-d/2-1/2}\left(  U^{2}%
-t^{2}\right)  ^{\left(  d-3\right)  /2}dt\\
\leq &  CU^{\left(  d-3\right)  /2}\int_{D}^{U}t^{\nu-d/2+1/2}\left(
t-D\right)  ^{\nu-d/2-1/2}\left(  U-t\right)  ^{\left(  d-3\right)  /2}dt\\
\leq &  CU^{\left(  d-3\right)  /2}U^{\nu-d/2+1/2}\\
&  \hspace{0.7cm}\times\int_{0}^{1}\left(  z\left(  U-D\right)  \right)
^{\nu-d/2-1/2}\left(  \left(  1-z\right)  \left(  U-D\right)  \right)
^{\left(  d-3\right)  /2}\left(  U-D\right)  dz\\
= &  CU^{\nu-1}\left(  U-D\right)  ^{\nu-1}\int_{0}^{1}z^{\nu-d/2-1/2}\left(
1-z\right)  ^{\left(  d-3\right)  /2}dz,
\end{align*}
we obtain%
\begin{align*}
&  \int_{D}^{+\infty}t\left(  t^{2}-D^{2}\right)  ^{\nu-1+\left(  1-d\right)
/2}\mathcal{C}^{-1}\widetilde{H}\left(  t\right)  dt\\
\leq &  c\lambda_{X}^{d}\int_{\frac{D}{2\pi}}^{+\infty}u^{\nu}\left(
u-\frac{D}{2\pi}\right)  ^{\nu-1}\left(  \mathcal{F}_{d}\psi\left(
\lambda_{X}u\right)  \right)  ^{2}\mathcal{F}_{d}\eta\left(  u\right)  du.
\end{align*}
Thus, for all $d\geq1$,%
\begin{align*}
&  \int_{D}^{+\infty}t\left(  t^{2}-D^{2}\right)  ^{\nu-1+\left(  1-d\right)
/2}\mathcal{C}^{-1}\widetilde{H}\left(  t\right)  dt\\
\leq &  c\lambda_{X}^{d}\int_{\frac{D}{2\pi}}^{+\infty}u^{\nu}\left(
u-\frac{D}{2\pi}\right)  ^{\nu-1}\left(  \mathcal{F}_{d}\psi\left(
\lambda_{X}u\right)  \right)  ^{2}\mathcal{F}_{d}\eta\left(  u\right)  du\\
\leq &  c\lambda_{X}^{d-1}\int_{\lambda_{X}\frac{D}{2\pi}}^{+\infty}\left(
\frac{v}{\lambda_{X}}\right)  ^{\nu}\left(  \frac{v}{\lambda_{X}}-\frac
{D}{2\pi}\right)  ^{\nu-1}\left(  \mathcal{F}_{d}\psi\left(  v\right)
\right)  ^{2}\mathcal{F}_{d}\eta\left(  \frac{v}{\lambda_{x}}\right)  dv\\
\leq &  c\lambda_{X}^{d-2\nu}\int_{\lambda_{X}\frac{D}{2\pi}}^{+\infty}v^{\nu
}\left(  v-\frac{D\lambda_{X}}{2\pi}\right)  ^{\nu-1}\left(  \mathcal{F}%
_{d}\psi\left(  v\right)  \right)  ^{2}dv\\
\leq &  c\lambda_{X}^{d-2\nu}\int_{\lambda_{X}\frac{D}{2\pi}}^{+\infty}%
v^{2\nu-1}\left(  \mathcal{F}_{d}\psi\left(  v\right)  \right)  ^{2}dv\\
\leq &  c\lambda_{X}^{d-2\nu}\frac{1}{\left(  1+\lambda_{X}D\right)  ^{M}}.
\end{align*}
Finally, arguing as in the previous lemma,%
\begin{align*}
&  \sum_{r=1}^{R}\sum_{j=1}^{K_{r}}\sum_{s=r}^{R}\sum_{i=1}^{K_{s}}%
a_{r,j}a_{s,i}\int_{d\left(  x_{r,j},x_{s,i}\right)  }^{+\infty}t\left(
t^{2}-d\left(  x_{r,j},x_{s,i}\right)  ^{2}\right)  ^{\nu-1+\left(
1-d\right)  /2}\mathcal{C}^{-1}\widetilde{H}\left(  t\right)  dt\\
&  \leq C\sum_{r=1}^{R}\sum_{j=1}^{K_{r}}\sum_{s=r}^{R}\sum_{i=1}^{K_{s}%
}a_{r,j}a_{s,i}\lambda_{X}^{d-2\nu}\frac{1}{\left(  1+\lambda_{X}d\left(
x_{r,j},x_{s,i}\right)  \right)  ^{M}}\\
&  \leq CX^{1-2\nu/d}\sum_{r=1}^{R}\left(  \sum_{j=1}^{K_{r}}a_{r,j}\right)
^{2}.
\end{align*}

\end{proof}

\begin{lemma}
There exist $C>0$ and $X_{0}>0$ such that for every $X>X_{0}$%
\[
\left\vert \sum_{r=1}^{R}\sum_{j=1}^{K_{r}}\sum_{s=1}^{R}\sum_{i=1}^{K_{s}%
}a_{r,j}a_{s,i}F_{4}\left(  x_{r,j},x_{s,i}\right)  \right\vert \leq
C\sum_{r=1}^{R}\left(  \sum_{j=1}^{K_{r}}a_{r,j}\right)  ^{2}.
\]

\end{lemma}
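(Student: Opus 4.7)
The plan is to show that $F_{4}$ is a kernel supported in $\{d(x,y)\le\varepsilon\}$ with off-diagonal decay of the form $(1+\lambda_{X}d(x,y))^{-M}$, and then to apply, verbatim, the near/far summation argument used in the previous two lemmas. The starting observation is that $R_{Q}(t,x,y)=0$ whenever $|t|\le d(x,y)$: by finite propagation speed the wave kernel $\cos(t\sqrt{\Delta})(x,y)$ vanishes on $\{|t|<d(x,y)\}$, while by Proposition \ref{Lemma Hormander} the distributions $(E_{\nu}-\check{E}_{\nu})(t,z)$ and their $t$-derivatives are supported in the closed double cone $\{|t|\ge|z|\}$, so $K_{Q}(t,x,y)$ also vanishes on $\{|t|<d(x,y)\}$. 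Since by Theorem \ref{sogge} $R_{Q}$ is continuous on $[-\varepsilon,\varepsilon]\times\mathcal{M}\times\mathcal{M}$, the vanishing extends to the closed set $\{|t|\le d(x,y)\}$. Consequently $F_{4}(x,y)=0$ whenever $d(x,y)>\varepsilon$, and otherwise the $t$-integration is effectively over $[d(x,y),\varepsilon]$.

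Next I would derive a pointwise decay estimate. Choosing $Q$ sufficiently large, Theorem \ref{sogge} gives $|R_{Q}(t,x,y)|\le C\varepsilon^{2Q+2-d}$ uniformly for $t\in[0,\varepsilon]$. For the kernel $\mathcal{C}^{-1}\widetilde{H}$, the transplantation formula (\ref{Transplantation}) already used in the $F_{3}$ lemma expresses it as an integral of $(\mathcal{F}_{d}\psi(\lambda_{X}u))^{2}\mathcal{F}_{d}\eta(u)$, from which the rapid decay of $\mathcal{F}_{d}\psi$ and a change of variable $v=\lambda_{X}u$ readily produce, for every $M$,
\[
\mathcal{C}^{-1}\widetilde{H}(t)\le C_{M}\frac{\lambda_{X}}{(1+\lambda_{X}t)^{M}}.
\]
Combining with the support property of $R_{Q}$ and integrating from $d(x,y)$ to infinity yields
\[
|F_{4}(x,y)|\le\frac{C_{M}}{(1+\lambda_{X}d(x,y))^{M-1}}.
\]

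The final step is then exactly the near/far argument used in the lemmas for $F_{2}$ and $F_{3}$: one splits the pairs $(r,s)$ with $s\ge r$ according to whether $\mathcal{B}_{s}\subset B(z_{r},c_{3}Y^{-1/d})$ or not, uses the ordering $\sum_{j}a_{r,j}\ge\sum_{i}a_{s,i}$ for $r\le s$, and decomposes the far contribution into dyadic annuli. For $M>d$ the resulting geometric sum $\sum_{\ell}2^{-\ell(M-d)}$ converges, and since no factor $\lambda_{X}^{d-2\nu}$ is present in the pointwise bound for $F_{4}$, one obtains directly the required estimate $C\sum_{r}(\sum_{j}a_{r,j})^{2}$, with no power of $X$ left over. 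I expect the chief obstacle to be precisely the support property of $R_{Q}$: without it, only the uniform bound $|F_{4}|\le C$ is available, which controls the bilinear sum merely by $\big(\sum_{j}a_{j}\big)^{2}$—too large by a factor of order $X$, since $\lambda_{X}\varepsilon\to\infty$ and order-$X$ regions $\mathcal{B}_{s}$ lie within distance $\varepsilon$ of any fixed $\mathcal{B}_{r}$. It is exactly the vanishing of $R_{Q}$ inside the light cone, together with the concentration of $\mathcal{C}^{-1}\widetilde{H}$ at scale $\lambda_{X}^{-1}$, that produces the decay needed to collapse the bilinear estimate onto the diagonal.
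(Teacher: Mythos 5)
Your proof is correct, but it takes a genuinely different route from the paper's. The paper proves only the crude uniform bound $\left\vert F_{4}(x,y)\right\vert \leq c\lambda_{X}^{d-2Q-2}$, with no off-diagonal decay at all, and then controls the bilinear sum by the trivial inequality $\sum_{r,s}S_{r}S_{s}\leq R\sum_{r}S_{r}^{2}\leq\kappa X\sum_{r}S_{r}^{2}$; this is admissible only because the hypothesis $Q>d+3$ makes $\lambda_{X}^{d-2Q-2}\cdot X$ go to zero, so the argument leans decisively on $Q$ being large. You instead establish a pointwise off-diagonal decay $\left\vert F_{4}(x,y)\right\vert \leq C_{M}(1+\lambda_{X}d(x,y))^{-(M-1)}$ by noting that $R_{Q}$ vanishes for $\left\vert t\right\vert \leq d(x,y)$ (finite propagation speed for $\cos(t\sqrt{\Delta})$ together with the light-cone support of $E_{\nu}-\check{E}_{\nu}$, then continuity of $R_{Q}$) and that $\mathcal{C}^{-1}\widetilde{H}$ is concentrated at scale $\lambda_{X}^{-1}$; you then recycle the near/far dyadic summation from the $F_{2}$ and $F_{3}$ lemmas. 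Your route is somewhat more work up front but pays off in two ways: it does not require $Q$ to be large (only $2Q+2-d\geq0$ so that the parametrix remainder estimate is usable on $[0,\varepsilon]$), and it keeps the $F_{4}$ estimate structurally parallel to the other remainder terms, which makes the proof more uniform. The paper's route is shorter precisely because it sidesteps the finite-propagation-speed observation and absorbs the loss of a factor $X$ into the extra $\lambda_{X}^{-2Q-2+d}$ decay bought by a large $Q$. Both arguments are valid; the one genuinely new ingredient in yours is the support property of $R_{Q}$, which the paper never invokes.
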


\begin{proof}
Recall that for every $x,y\in\mathcal{M}$,
\[
F_{4}\left(  x,y\right)  =\int_{0}^{\varepsilon}R_{Q}\left(  t,x,y\right)
\mathcal{C}^{-1}\widetilde{H}\left(  t\right)  dt.
\]
As before, if $d=1$, then%
\[
\mathcal{C}^{-1}\widetilde{H}\left(  t\right)  =\frac{1}{\pi}\mathcal{F}%
_{1}\widetilde{H}\left(  \frac{t}{2\pi}\right)  =\frac{\lambda_{X}}{\pi
}\left(  \mathcal{F}_{1}\psi\left(  \lambda_{X}\frac{t}{2\pi}\right)  \right)
^{2}\mathcal{F}_{1}\eta\left(  \frac{t}{2\pi}\right)
\]
and, by Theorem \ref{sogge},%
\begin{align*}
\left\vert F_{4}\left(  x,y\right)  \right\vert  &  \leq\frac{\lambda_{X}}%
{\pi}\int_{0}^{\varepsilon}\left\vert R_{Q}\left(  t,x,y\right)  \right\vert
\left(  \mathcal{F}_{1}\psi\left(  \lambda_{X}\frac{t}{2\pi}\right)  \right)
^{2}\mathcal{F}_{1}\eta\left(  \frac{t}{2\pi}\right)  dt\\
&  \leq c\lambda_{X}^{1-2Q-2}\int_{0}^{+\infty}t^{2Q+1}\left(  \mathcal{F}%
_{1}\psi\left(  t\right)  \right)  ^{2}dt,
\end{align*}

A similar estimate holds for $d\geq2$. Indeed, as in the previous lemma,%
\[
\mathcal{C}^{-1}\widetilde{H}\left(  t\right)  =c\lambda_{X}^{d}\int_{\frac
{t}{2\pi}}^{+\infty}\left(  u^{2}-\left(  \frac{t}{2\pi}\right)  ^{2}\right)
^{\left(  d-1\right)  /2-1}u\left(  \mathcal{F}_{d}\psi\left(  \lambda
_{X}u\right)  \right)  ^{2}\mathcal{F}_{d}\eta\left(  u\right)  du
\]
so that, again by Theorem \ref{sogge},
\begin{align*}
&  \left\vert F_{4}\left(  x,y\right)  \right\vert \\
= & c \left\vert \lambda_{X}^{d}\int_{0}^{\varepsilon}R_{Q}\left(
t,x,y\right)  \int_{\frac{t}{2\pi}}^{+\infty}\left(  u^{2}-\left(  \frac
{t}{2\pi}\right)  ^{2}\right)  ^{\left(  d-1\right)  /2-1}u\left(
\mathcal{F}_{d}\psi\left(  \lambda_{X}u\right)  \right)  ^{2}\mathcal{F}%
_{d}\eta\left(  u\right)  dudt\right\vert \\
\leq & c  \lambda_{X}^{d}\int_{0}^{+\infty}u\left(  \mathcal{F}_{d}\psi\left(
\lambda_{X}u\right)  \right)  ^{2}\mathcal{F}_{d}\eta\left(  u\right)
\\
&  \hspace{1cm}\times\int_{0}^{\min\left(  2\pi u,\varepsilon\right)  }\left\vert
R_{Q}\left(  t,x,y\right)  \right\vert \left(  u^{2}-\left(  \frac{t}{2\pi
}\right)  ^{2}\right)  ^{\left(  d-1\right)  /2-1}dtdu\\
\leq &  c\lambda_{X}^{d}\int_{0}^{+\infty}u\left(  \mathcal{F}_{d}\psi\left(
\lambda_{X}u\right)  \right)  ^{2}\mathcal{F}_{d}\eta\left(  u\right)
\int_{0}^{2\pi u}t^{2Q+2-d}\left(  u^{2}-\left(  \frac{t}{2\pi}\right)
^{2}\right)  ^{\left(  d-1\right)  /2-1}dtdu\\
\leq &  c\lambda_{X}^{d}\int_{0}^{+\infty}\left(  \mathcal{F}_{d}\psi\left(
\lambda_{X}u\right)  \right)  ^{2}\mathcal{F}_{d}\eta\left(  u\right)
u^{2Q+1}du.\\
\leq &  c\lambda_{X}^{d-2Q-2}\int_{0}^{+\infty}\left(  \mathcal{F}_{d}%
\psi\left(  v\right)  \right)  ^{2}v^{2Q+1}dv.
\end{align*}
Finally, for all $d\geq1$, since $R\leq Y=\kappa X$,%
\begin{align*}
&  \left\vert \sum_{r=1}^{R}\sum_{j=1}^{K_{r}}\sum_{s=1}^{R}\sum_{i=1}^{K_{s}%
}a_{r,j}a_{s,i}\int_{0}^{\varepsilon}R_{Q}\left(  t,x_{r,j},x_{s,i}\right)
\mathcal{C}^{-1}\widetilde{H}\left(  t\right)  dt\right\vert \\
\leq &  c\lambda_{X}^{d-2Q-2}\sum_{r=1}^{R}\sum_{j=1}^{K_{r}}\sum_{s=1}%
^{R}\sum_{i=1}^{K_{s}}a_{r,j}a_{s,i}\\
\leq &  cX^{1-\left(  2Q+2\right)  /d}\sum_{r=1}^{R}\sum_{s=1}^{R}\sum
_{j=1}^{K_{r}}a_{r,j}\sum_{i=1}^{K_{s}}a_{s,i}\\
\leq &  cX^{1-\left(  2Q+2\right)  /d}\sum_{r=1}^{R}\sum_{s=r}^{R}\left(
\sum_{j=1}^{K_{r}}a_{r,j}\right)  ^{2}\\
\leq &  cX^{1-\left(  2Q+2\right)  /d}\sum_{r=1}^{R}\left(  R-r+1\right)
\left(  \sum_{j=1}^{K_{r}}a_{r,j}\right)  ^{2}\\
\leq &  c\kappa X^{2-\left(  2Q+2\right)  /d}\sum_{r=1}^{R}\left(  \sum
_{j=1}^{K_{r}}a_{r,j}\right)  ^{2},
\end{align*}
and since $Q>d+3$, the exponent $2-\left(  2Q+2\right)  /d$ is negative and
the result follows.
\end{proof}

\begin{lemma}
There exist $C>0$ and $X_{0}>0$ such that for every $X>X_{0}$%
\[
\left\vert \sum_{r=1}^{R}\sum_{j=1}^{K_{r}}\sum_{s=1}^{R}\sum_{i=1}^{K_{s}%
}a_{r,j}a_{s,i}F_{5}\left(  x_{r,j},x_{s,i}\right)  \right\vert \leq
C\sum_{r=1}^{R}\left(  \sum_{j=1}^{K_{r}}a_{r,j}\right)  ^{2}.
\]

\end{lemma}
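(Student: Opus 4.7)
The plan is to exploit the fact that $F_{5}$ is a symmetric spectral kernel whose Fourier multipliers
\[
c_{m}:=H\!\left(\lambda_{m}/\lambda_{X}\right)-\widetilde{H}\!\left(\lambda_{m}\right)
\]
decay super-polynomially in both $\lambda_{m}$ and $\lambda_{X}$. Diagonalisation in the basis $\{\varphi_{m}\}$ gives
\[
\sum_{r,j,s,i}a_{r,j}a_{s,i}F_{5}\!\left(x_{r,j},x_{s,i}\right)=\sum_{m=0}^{+\infty}c_{m}\left\vert\sum_{r,j}a_{r,j}\varphi_{m}\!\left(x_{r,j}\right)\right\vert^{2},
\]
so everything reduces to a uniform decay estimate on $c_{m}$ coupled with a standard spectral bound.

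The core claim is $\left\vert c_{m}\right\vert\leq C_{N,M}\lambda_{X}^{-N}(1+\lambda_{m})^{-M}$ for every $N,M\geq 0$. Computing Fourier transforms, $\mathcal{F}_{d}\!\left[H(\cdot/\lambda_{X})\right](\xi)=\lambda_{X}^{d}\!\left(\mathcal{F}_{d}\psi(\lambda_{X}\xi)\right)^{2}=:G(\xi)$, while the identity $\eta=\mathcal{F}_{d}\phi$ together with Fourier inversion applied to the radial $\phi$ give $\mathcal{F}_{d}\eta=\phi$, hence $\mathcal{F}_{d}\widetilde{H}=G\phi$. Subtracting,
\[
H(x/\lambda_{X})-\widetilde{H}(x)=\mathcal{F}_{d}^{-1}\!\left[G(1-\phi)\right](x).
\]
The cutoff $1-\phi$ vanishes on $B(0,\varepsilon/4\pi)$, so the integrand lives where $\lambda_{X}|\xi|\geq c\lambda_{X}$, and the fast decay of $\mathcal{F}_{d}\psi$ gives $\left\vert G(\xi)(1-\phi(\xi))\right\vert\leq C_{N}\lambda_{X}^{d}(1+\lambda_{X}|\xi|)^{-N}$ for every $N$; standard integration by parts in the inverse Fourier representation then transfers this into the claimed polynomial decay in $|x|=\lambda_{m}$ as well.

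With this decay in hand, I would bound $F_{5}$ pointwise. By Cauchy--Schwarz and the local Weyl estimate $\sum_{\lambda_{m}\leq\lambda}|\varphi_{m}(x)|^{2}\leq C(1+\lambda)^{d}$ (uniform in $x$), a summation by parts in $\lambda$ yields $\sum_{m}(1+\lambda_{m})^{-M}|\varphi_{m}(x)|^{2}\leq C$ whenever $M>d$, so that
\[
|F_{5}(x,y)|\leq\sum_{m}|c_{m}||\varphi_{m}(x)||\varphi_{m}(y)|\leq C_{N}\lambda_{X}^{-N}
\]
uniformly in $x,y\in\mathcal{M}$ for every $N$. Consequently
\[
\left\vert\sum_{r,j,s,i}a_{r,j}a_{s,i}F_{5}(x_{r,j},x_{s,i})\right\vert\leq C_{N}\lambda_{X}^{-N}\!\left(\sum_{r,j}a_{r,j}\right)^{2}\leq C_{N}\lambda_{X}^{-N}R\sum_{r=1}^{R}\!\left(\sum_{j=1}^{K_{r}}a_{r,j}\right)^{2},
\]
the last step being Cauchy--Schwarz on $\sum_{r}S_{r}$. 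Since $R\leq Y=\kappa X$ and $\lambda_{X}\sim X^{1/d}$ by Weyl's law, the bound becomes $C_{N}\kappa X^{1-N/d}\sum_{r}(\sum_{j}a_{r,j})^{2}$, which is $\leq C\sum_{r}(\sum_{j}a_{r,j})^{2}$ for any $N>d$ and $X$ sufficiently large. The principal obstacle is the Fourier-analytic decay estimate on $c_{m}$; once that is available, everything else is routine through the local Weyl law and a crude Cauchy--Schwarz.
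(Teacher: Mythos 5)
Your argument is correct and reaches the same conclusion, but the route to the key decay estimate on $c_m = H(\lambda_m/\lambda_X) - \widetilde{H}(\lambda_m)$ genuinely differs from the paper's. The paper works in physical space: for $|x|\geq 2\lambda_X$ it exploits the compact support of $H$ together with the rapid decay of $\eta$ to get $|I(x)|\leq C(1+|x|-\lambda_X)^{-M}$, while for $|x|<2\lambda_X$ it Taylor-expands $H(\cdot/\lambda_X)$ to order $M$ and uses the vanishing of all moments $\int y^\alpha\eta\,dy$ for $1\leq|\alpha|\leq M-1$ (a consequence of $\mathcal{F}_d\eta=\phi$ being constant near the origin) to obtain $|I(x)|\leq C\lambda_X^{-M}$; it then combines these with the individual $L^\infty$ eigenfunction bound $\|\varphi_m\|_\infty\leq C(1+\lambda_m)^{(d-1)/2}$ and the (integrated) Weyl law. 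You instead pass to the Fourier side, observing that $\mathcal{F}_d I = G(1-\phi)$ is supported in $\{|\xi|\geq\varepsilon/4\pi\}$ where the Schwartz tails of $\mathcal{F}_d\psi(\lambda_X\cdot)$ produce a factor $\lambda_X^{-N}$ for every $N$; bounding $\|\Delta^j_\xi[G(1-\phi)]\|_{L^1}$ (each differentiation of $G$ costs $\lambda_X$, but is more than compensated by the tail decay on that region) then gives the simultaneous bound $|I(x)|\leq C_{N,M}\lambda_X^{-N}(1+|x|)^{-M}$, which is in fact slightly sharper than what the paper establishes. You also bring in the pointwise local Weyl bound $\sum_{\lambda_m\leq\lambda}|\varphi_m(x)|^2\leq C(1+\lambda)^d$ rather than the cruder individual $L^\infty$ bound; this streamlines the subsequent summation, though either is enough here. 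The two approaches encode the same cancellation — the Fourier-side version is more economical, while the paper's physical-space version makes the role of the vanishing moments of $\eta$ concrete. Your final Cauchy--Schwarz step $(\sum_r S_r)^2\leq R\sum_r S_r^2$ together with $R\leq\kappa X$ and $\lambda_X\sim X^{1/d}$ matches the paper's accounting.
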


\begin{proof}
We need to estimate%
\begin{align*}
&  \left\vert \sum_{r=1}^{R}\sum_{j=1}^{K_{r}}\sum_{s=1}^{R}\sum_{i=1}^{K_{s}%
}a_{r,j}a_{s,i}\sum_{m=0}^{+\infty}\left(  H\left(  \frac{\lambda_{m}}%
{\lambda_{X}}\right)  -\widetilde{H}\left(  \lambda_{m}\right)  \right)
\varphi_{m}\left(  x_{r,j}\right)  \overline{\varphi_{m}\left(  x_{s,i}%
\right)  }\right\vert \\
\leq &  2\sum_{r=1}^{R}\sum_{j=1}^{K_{r}}\sum_{s=r}^{R}\sum_{i=1}^{K_{s}}%
\sum_{m=0}^{+\infty}a_{r,j}a_{s,i}\left\vert H\left(  \frac{\lambda_{m}%
}{\lambda_{X}}\right)  -\widetilde{H}\left(  \lambda_{m}\right)  \right\vert
\left\vert \varphi_{m}\left(  x_{r,j}\right)  \right\vert \left\vert
\varphi_{m}\left(  x_{s,i}\right)  \right\vert .
\end{align*}
Let us first study the term
\[
I\left(  x\right)  =H\left(  \frac{x}{\lambda_{X}}\right)  -\widetilde
{H}\left(  x\right)  =\int_{\mathbb{R}^{d}}\left[  H\left(  \frac{x}%
{\lambda_{X}}\right)  -H\left(  \frac{x-y}{\lambda_{X}}\right)  \right]
\eta\left(  y\right)  dy.
\]
Since $\eta\left(  y\right)  $ has rapid decay at infinity and $H\left(
x\right)  $ is supported in $B\left(  0,1\right)  $ , if $\left\vert
x\right\vert \geq2\lambda_{X}$ we have%
\begin{align*}
\left\vert I\left(  x\right)  \right\vert  &  \leq\int_{\mathbb{R}^{d}%
}\left\vert H\left(  \frac{x-y}{\lambda_{X}}\right)  \eta\left(  y\right)
\right\vert dy\leq\int_{\left\{  \left\vert x-y\right\vert \leq\lambda
_{X}\right\}  }\left\vert H\left(  \frac{x-y}{\lambda_{X}}\right)  \eta\left(
y\right)  \right\vert dy\\
&  \leq c\int_{\left\{  \left\vert y\right\vert \geq\left\vert x\right\vert
-\lambda_{X}\right\}  }\left\vert \eta\left(  y\right)  \right\vert dy\leq
C\left(  1+\left\vert x\right\vert -\lambda_{X}\right)  ^{-M}.
\end{align*}
Assume $\left\vert x\right\vert <2\lambda_{X}$. By Taylor's theorem with
integral reminder we can write%
\begin{align*}
&  H\left(  \frac{x}{\lambda_{X}}-\frac{y}{\lambda_{X}}\right)  \\
= &  H\left(  \frac{x}{\lambda_{X}}\right)  +\sum_{1\leq\left\vert
\alpha\right\vert \leq M-1}\frac{1}{\alpha!}\frac{\partial^{\left\vert
\alpha\right\vert }H}{\partial x^{\alpha}}\left(  \frac{x}{\lambda_{X}%
}\right)  \left(  -\frac{y}{\lambda_{X}}\right)  ^{\alpha}\\
&  +\sum_{\left\vert \alpha\right\vert =M}\frac{M}{\alpha!}\left(  -\frac
{y}{\lambda_{X}}\right)  ^{\alpha}\int_{0}^{1}\left(  1-t\right)  ^{M-1}%
\frac{\partial^{\left\vert \alpha\right\vert }H}{\partial x^{\alpha}}\left(
\frac{x}{\lambda_{X}}-t\frac{y}{\lambda_{X}}\right)  dt
\end{align*}
so that%
\[
H\left(  \frac{x}{\lambda_{X}}-\frac{y}{\lambda_{X}}\right)  =H\left(
\frac{x}{\lambda_{X}}\right)  +\sum_{1\leq\left\vert \alpha\right\vert \leq
M-1}\frac{1}{\alpha!}\frac{\partial^{\left\vert \alpha\right\vert }H}{\partial
x^{\alpha}}\left(  \frac{x}{\lambda_{X}}\right)  \left(  -\frac{y}{\lambda
_{X}}\right)  ^{\alpha}+O\left(  \left\vert -\frac{y}{\lambda_{X}}\right\vert
^{M}\right)  .
\]
It follows that%
\begin{align*}
I\left(  x\right)   &  =\int_{\mathbb{R}^{d}}\left[  H\left(  \frac{x}%
{\lambda_{X}}\right)  -H\left(  \frac{x-y}{\lambda_{X}}\right)  \right]
\eta\left(  y\right)  dy\\
&  =-\sum_{1\leq\left\vert \alpha\right\vert \leq M-1}\frac{1}{\alpha!}%
\frac{\partial^{\left\vert \alpha\right\vert }H}{\partial x^{\alpha}}\left(
\frac{x}{\lambda_{X}}\right)  \int_{\mathbb{R}^{d}}\left(  -\frac{y}%
{\lambda_{X}}\right)  ^{\alpha}\eta\left(  y\right)  dy+\int_{\mathbb{R}^{d}%
}O\left(  \frac{\left\vert y\right\vert ^{M}}{\lambda_{X}^{M}}\right)
\eta\left(  y\right)  dy
\end{align*}
and since
\[
\int_{\mathbb{R}^{d}}y^{\alpha}\eta\left(  y\right)  dy=\mathcal{F}_{d}\left(
y^{\alpha}\eta\left(  y\right)  \right)  \left(  0\right)  =\left(  -2\pi
i\right)  ^{-\left\vert \alpha\right\vert }\frac{\partial^{\left\vert
\alpha\right\vert }\mathcal{F}_{d}\eta}{\partial\xi^{\alpha}}\left(  0\right)
=0
\]
we obtain%
\[
\left\vert I\left(  x\right)  \right\vert \leq c\lambda_{X}^{-M}.
\]
Using Hormander's estimates on the $L^{\infty}$ norm of the eigenfunctions (see \cite[(3.2.2), page 48]{sogge})
\[
\left\Vert \varphi_{m}\right\Vert _{\infty}\leq C\left(  1+\lambda
_{m}\right)  ^{\frac{d-1}{2}},\]
we obtain
\begin{align*}
&  \sum_{r=1}^{R}\sum_{j=1}^{K_{r}}\sum_{s=r}^{R}\sum_{i=1}^{K_{s}}\sum
_{m=0}^{+\infty}a_{r,j}a_{s,i}\left\vert H\left(  \frac{\lambda_{m}}%
{\lambda_{X}}\right)  -\widetilde{H}\left(  \lambda_{m}\right)  \right\vert
\left\vert \varphi_{m}\left(  x_{r,j}\right)  \right\vert \left\vert
\varphi_{m}\left(  x_{s,i}\right)  \right\vert \\
\leq &  c\lambda_{X}^{-M}\sum_{\lambda_{m}\leq2\lambda_{X}}\left(
1+\lambda_{m}\right)  ^{d-1}\sum_{r=1}^{R}\sum_{s=r}^{R}\sum_{j=1}^{K_{r}%
}a_{r,j}\sum_{i=1}^{K_{s}}a_{s,i}\\
&  +c\sum_{\lambda_{m}\geq2\lambda_{X}}\left(  1+\lambda_{m}-\lambda
_{X}\right)  ^{-M}\lambda_{m}^{d-1}\sum_{r=1}^{R}\sum_{s=r}^{R}\sum
_{j=1}^{K_{r}}a_{r,j}\sum_{i=1}^{K_{s}}a_{s,i}\\
\leq &  c\left(  \lambda_{X}^{-M}\sum_{\lambda_{m}\leq2\lambda_{X}}\left(
1+\lambda_{m}\right)  ^{d-1}+\sum_{\lambda_{m}\geq2\lambda_{X}}\lambda
_{m}^{-M+d-1}\right)  \sum_{r=1}^{R}\left(  R-r+1\right)  \left(  \sum
_{j=1}^{K_{r}}a_{r,j}\right)  ^{2}\\
\leq &  c\kappa X\left(  \lambda_{X}^{-M}\sum_{\lambda_{m}\leq2\lambda_{X}%
}\left(  1+\lambda_{m}\right)  ^{d-1}+\sum_{\lambda_{m}\geq2\lambda_{X}%
}\lambda_{m}^{-M+d-1}\right)  \sum_{r=1}^{R}\left(  \sum_{j=1}^{K_{r}}%
a_{r,j}\right)  ^{2}.
\end{align*}
By Weyl's estimates, which say that the number of eigenvalues $\lambda_{m}%
^{2}\leq T^{2}$ is $\ $asymptotic to $cT^{d}$,%
\begin{align*}
&  \lambda_{X}^{-M}\sum_{\lambda_{m}\leq2\lambda_{X}}\left(  1+\lambda
_{m}\right)  ^{d-1}+\sum_{\lambda_{m}\geq2\lambda_{X}}\lambda_{m}^{-M+d-1}\\
\leq &  c\lambda_{X}^{-M}\lambda_{X}^{d}\lambda_{X}^{d-1}+\sum_{k=1}^{+\infty
}\sum_{2^{k}\lambda_{X}\leq\lambda_{m}\leq2^{k+1}\lambda_{X}}\lambda
_{m}^{-M+d-1}\\
\leq &  c\lambda_{X}^{-M+2d-1}+c\sum_{k=1}^{+\infty}2^{d\left(  k+1\right)
}\lambda_{X}^{d}\left(  2^{k}\lambda_{X}\right)  ^{-M+d-1}\\
\leq &  c\lambda_{X}^{-M+2d-1}+c\lambda_{X}^{-M+2d-1}\sum_{k=1}^{+\infty
}2^{\left(  2d-M-1\right)  k}%
\end{align*}
and taking $M$ such that $-M+2d-1<-d$ gives the result.
\end{proof}

\section{Final remarks}

A simple consequence of Theorem \ref{main} is the following estimate on the
maximum degree $X$ of linear combinations of eigenfunctions of the Laplacian
up to the eigenvalue $\lambda_{X}$ that a quadrature rule can integrate
exactly. This is a well known result for equal weights, see e.g.
\cite[Proposition 1]{GG}, or \cite[Theorem 2]{Steiner} where one can find an
estimate of the constant $C$ that depends only on the dimension of the
manifold. See also \cite[Theorem 1]{Steiner} for a result with general weights.

\begin{corollary}
Let $X$ be a positive integer and assume there exist points $\left\{  x\left(
j\right)  \right\}  _{j=1}^{N}$ and weights $\left\{  a_{j}\right\}
_{j=1}^{N}$ such that for every polynomial
\[
P\left(  x\right)  =\sum_{m=0}^{X}c_{m}\varphi_{m}\left(  x\right)
\]
we have%
\[
\int_{\mathcal{M}}P\left(  x\right)  dx=\sum_{j=1}^{N}a_{j}P\left(  x\left(
j\right)  \right)  .
\]
Then there exists a constant $C>0$ independent of $X$ and $N$ such that
\[
1\geq CX\sum_{j=1}^{N}a_{j}^{2}.
\]
In particular%
\[
CX\leqslant N.
\]

\end{corollary}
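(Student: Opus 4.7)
The plan is to apply Theorem \ref{main} directly to the points $\{x(j)\}_{j=1}^N$ with the quadrature weights $\{a_j\}_{j=1}^N$, and exploit the fact that the exactness hypothesis forces the left-hand side of the inequality to collapse to a single term.

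First I would observe that applying the quadrature formula to $P=\varphi_0\equiv 1$ gives
\[
1=\int_{\mathcal{M}}1\,d\mu=\sum_{j=1}^{N}a_{j},
\]
and applying it to $P=\varphi_m$ for any $1\leq m\leq X$ gives
\[
\sum_{j=1}^{N}a_{j}\varphi_{m}(x(j))=\int_{\mathcal{M}}\varphi_{m}(x)\,d\mu(x)=\int_{\mathcal{M}}\varphi_{m}\overline{\varphi_{0}}\,d\mu=0,
\]
by orthogonality with $\varphi_0$. Consequently, in the sum from Theorem~\ref{main}, only the $m=0$ term survives, and it equals $\bigl(\sum_j a_j\bigr)^2=1$.

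Next I would invoke Theorem~\ref{main} to conclude
\[
1=\sum_{m=0}^{X}\Bigl|\sum_{j=1}^{N}a_{j}\varphi_{m}(x(j))\Bigr|^{2}\geq CX\sum_{j=1}^{N}a_{j}^{2},
\]
which is the first claimed inequality. The second follows at once by Cauchy--Schwarz: from $\sum_j a_j=1$ we get
\[
1=\Bigl(\sum_{j=1}^{N}a_{j}\Bigr)^{2}\leq N\sum_{j=1}^{N}a_{j}^{2}\leq\frac{N}{CX},
\]
hence $CX\leq N$.

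There is essentially no obstacle here: the whole content of the corollary is the reduction to Theorem~\ref{main} via the two observations above. The only point that deserves a brief mention is that the hypothesis implicitly requires the weights $a_j$ to be positive (so that Theorem~\ref{main} applies), which is the standard setting for quadrature rules in this context.
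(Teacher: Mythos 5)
Your proof is correct and reaches the same endpoint as the paper's, but by a slightly more transparent route. The paper picks the single test polynomial $P(x)=\sum_{m=0}^X\bigl(\sum_i a_i\overline{\varphi_m(x(i))}\bigr)\varphi_m(x)$, evaluates both sides of the quadrature identity for this $P$, and identifies the right-hand side with $\sum_{m=0}^X\bigl|\sum_j a_j\varphi_m(x(j))\bigr|^2$, which is then bounded below via Theorem~\ref{main}. You instead apply the exactness hypothesis to each basis element $\varphi_m$ separately: $P=\varphi_0$ pins down $\sum_j a_j=1$, while $P=\varphi_m$ for $1\le m\le X$ forces $\sum_j a_j\varphi_m(x(j))=0$, so the sum in Theorem~\ref{main} collapses exactly to $1$. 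Both arguments use precisely the same two ingredients (exactness of the quadrature and Theorem~\ref{main}); yours avoids the need to motivate the clever reproducing-kernel-type polynomial $P$, and it makes visible the stronger fact that the left side of Theorem~\ref{main} is \emph{exactly} $1$ under the hypothesis, not merely $\ge CX\sum a_j^2$. Your remark about positivity of the $a_j$ is apt: Theorem~\ref{main} is stated for positive weights, and both your argument and the paper's rely on this implicitly, so the corollary should be read with that hypothesis in force.
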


\begin{proof}
Since $\varphi_{0}\left(  x\right)  \equiv1$ we must have $\sum_{i=1}^{N}%
a_{i}=1$. Let
\[
P\left(  x\right)  =\sum_{m=0}^{X}\sum_{i=1}^{N}a_{i}\overline{\varphi
_{m}\left(  x\left(  i\right)  \right)  }\varphi_{m}\left(  x\right)  ,
\]
then%
\begin{align*}
\int_{\mathcal{M}}P\left(  x\right)  dx  &  =\int_{\mathcal{M}}\sum_{m=0}%
^{X}\sum_{i=1}^{N}a_{i}\overline{\varphi_{m}\left(  x\left(  i\right)
\right)  }\varphi_{m}\left(  x\right)  dx\\
&  =\sum_{m=0}^{X}\sum_{i=1}^{N}a_{i}\overline{\varphi_{m}\left(  x\left(
i\right)  \right)  }\int_{\mathcal{M}}\varphi_{m}\left(  x\right)
dx=\sum_{i=1}^{N}a_{i}=1.
\end{align*}
On the other hand%
\begin{align*}
\sum_{j=1}^{N}a_{j}P\left(  x\left(  j\right)  \right)   &  =\sum_{j=1}%
^{N}a_{j}\sum_{m=0}^{X}\sum_{i=1}^{N}a_{i}\overline{\varphi_{m}\left(
x\left(  i\right)  \right)  }\varphi_{m}\left(  x\left(  j\right)  \right) \\
&  =\sum_{m=0}^{X}\left\vert \sum_{j=1}^{N}a_{j}\varphi_{m}\left(  x\left(
j\right)  \right)  \right\vert ^{2}\geq CX\sum_{j=1}^{N}a_{j}^{2}.
\end{align*}
Hence%
\[
1\geq CX\sum_{j=1}^{N}a_{j}^{2}.
\]
Applying Cauchy-Schwarz inequality to $1=\sum_{i=1}^{N}a_{i}$ we easily obtain
\[
\sum_{j=1}^{N}a_{j}^{2}\geqslant1/N
\]
and therefore $N\geq CX$.
\end{proof}

\end{document}